\theoremstyle{plain}
\newtheorem{thm}{\protect\theoremname}[section]
  \theoremstyle{definition}
  \newtheorem{defn}[thm]{\protect\definitionname}
  \theoremstyle{plain}
  \newtheorem{prop}[thm]{\protect\propositionname}
  \theoremstyle{remark}
  \newtheorem{rem}[thm]{\protect\remarkname}
  \theoremstyle{definition}
  \newtheorem{example}[thm]{\protect\examplename}
\newenvironment{keywords}{ \noindent\footnotesize\textbf{Keywords and phrases:}}{}
\newenvironment{class}{\noindent\footnotesize\textbf{Mathematics subject classification 2000:}}{}
\newcommand*{\Span}{\operatorname{Span}}            
\newcommand*{\dive}{\operatorname{div}}
\newcommand*{\curl}{\operatorname{curl}}
\newcommand*{\Grad}{\operatorname{Grad}}
\newcommand*{\Div}{\operatorname{Div}}
\newcommand*{\grad}{\operatorname{grad}}
\renewcommand*{\i}{\mathrm{i}}
\DeclareMathAccent{\Circ}{\mathalpha}{operators}{"17}
\newcommand{\interior}[1]{\Circ{#1}}
\renewcommand{\Im}{\operatorname{\mathfrak{Im}}}
\renewcommand{\Re}{\operatorname{\mathfrak{Re}}}
\renewcommand{\hat}{\widehat}
\renewcommand{\tilde}{\widetilde}
\renewcommand*{\epsilon}{\varepsilon}
\renewcommand*{\theta}{\vartheta}
\renewcommand*{\nu}{\varrho}
  \providecommand{\definitionname}{Definition}
  \providecommand{\examplename}{Example}
  \providecommand{\propositionname}{Proposition}
  \providecommand{\remarkname}{Remark}
\providecommand{\theoremname}{Theorem}
\begin{document}
\setcounter{section}{-1}

\date{}

\title{Mother Operators and their Descendants.%
\thanks{This paper is a corrected update of an extended pre-print of a later
peer-reviewed version of a paper by the same title published originally
only by the first author  in J. Math. Anal. Appl., (1), 403, 54-62,
2013; http://dx.doi.org/10.1016/j.jmaa.2013.02.004%
}}

\author{Rainer Picard, Sascha Trostorff \& Marcus Waurick \thanks{Institut für Analysis, Fachrichtung Mathematik, Technische Universität Dresden, Germany, rainer.picard@tu-dresden.de} }
\maketitle
\begin{abstract}
\noindent \textbf{Abstract.} A mechanism deriving new well-posed
evolutionary equations from given ones is inspected. It turns out
that there is one particular spatial operator from which many of the
standard evolutionary problems of mathematical physics can be generated
by this abstract mechanism using suitable projections. The complexity
of the dynamics of the phenomena considered can be described in terms
of suitable material laws. The idea is illustrated with a number of
concrete examples.
\end{abstract}
\begin{keywords}
evolutionary equations, material laws, co-variant derivative, acoustics, electrodynamics, Dirac operator, Reissner-Mindlin plate, Kirchhoff-Love plate, Thimoshenko beam, Euler-Bernoulli beam \end{keywords}

\begin{class}
47F05 Partial differential operators, 47N20 Applications to differential and integral equations, 35F05 General theory of linear first-order PDE, 35M10 PDE of mixed type, 46N20 Applications to differential and integral equations 
\end{class}\setcounter{page}{1}

\tableofcontents{}

\setcounter{section}{-1}

\section{Introduction}

In \cite{Pi2009-1,PDE_DeGruyter} it has been shown that the standard
(autonomous and linear) initial boundary value problems of mathematical
physics share a simple common form, if considered as first order systems.
Indeed, it is found that they are of the form
\[
\partial_{0}V+AU=F,
\]
where in the usual cases $A$ is skew-selfadjoint, and $U$ and $V$
are linked by a so-called material law
\[
V=\mathcal{M}\left(\partial_{0}^{-1}\right)U,
\]
where $\mathcal{M}$ is a bounded operator-valued function, analytic
in a ball $B_{\mathbb{C}}\left(r,r\right)$ of radius $r\in\mathbb{R}_{>0}$
centered at $r$. $\mathcal{M}\left(\partial_{0}^{-1}\right)$ is
then well-defined in terms of an operator-valued function calculus
associated with $\partial_{0}$ as a normal operator in the weighted
$L^{2}$-type space $H_{\nu,0}\left(\mathbb{R},H\right)$, $\nu>\frac{1}{2r}$,
with inner product
\[
\left(U,V\right)\mapsto\int_{\mathbb{R}}\left\langle U\left(t\right)|V\left(t\right)\right\rangle _{H}\,\exp\left(-2\nu t\right)\: dt.
\]
We will not need to recall the solution theory of such equations,
(which we like to refer to as ``evolutionary'' as the term ``evolution
equations'' appears to be reserved for a rather special case in this
wider class), since the purpose of this paper is not on well-posedness
issues but on a remarkable even more specific structural similarity
between various equations of mathematical physics. We shall indeed
see, that most of the standard initial boundary value problems of
mathematical physics can be derived from a single spatial differential
operator of the form

\[
A=\left(\begin{array}{cc}
0 & -\nabla^{*}\\
\nabla & 0
\end{array}\right)
\]
with a suitable domain to make $A$ skew-selfadjoint in
\[
H\coloneqq\left(\bigoplus_{k\in\mathbb{N}}L_{k}^{2}\left(\Omega\right)\right)\oplus\left(\bigoplus_{k\in\mathbb{N}}L_{k}^{2}\left(\Omega\right)\right).
\]
Here $\Omega$ is a non-empty open subset of a Riemannian $C_{1,1}$-manifold
$M$ with $\nabla=d\otimes$ denoting the co-variant derivative (Riemannian
connection). The spaces
\[
L_{k}^{2}\left(\Omega\right)
\]
are the completion of (real- or) complex-valued Lipschitz continuous
covariant $k$-tensor fields having compact support in $\Omega$ with
considered in the norm $\left|\:\cdot\:\right|_{k,0}$ induced by
the inner product
\[
\left(\varphi,\psi\right)\mapsto\int_{M}\left\langle \varphi|\psi\right\rangle _{k}\: V,
\]
where $V$ denotes the volume element associated with the Riemannian
metric tensor field $g$ given by the Riemannian structure of the
manifold $M$. Here $\left\langle \varphi|\psi\right\rangle _{k}$
abbreviates the function 
\[
p\mapsto\left\langle \overline{\varphi\left(p\right)}|\psi\left(p\right)\right\rangle _{k,\left(TM_{p}\right)^{*}}
\]
where
\[
\left(\Phi,\Psi\right)\mapsto\left\langle \Phi|\Psi\right\rangle _{k,\left(TM_{p}\right)^{*}}
\]
is the (real) inner product of covariant $k$-tensors on the tangent
space $TM_{p}$ at $p\in M$ and $\overline{\cdots}$ denotes complex
conjugation. Covariant $0$-tensors are simply real numbers and so
$0$-tensors fields are real-valued functions on $M$ and so we let
\[
\left(\Phi,\Psi\right)\mapsto\left\langle \Phi|\Psi\right\rangle _{0,\left(TM_{p}\right)^{*}}\;:=\Phi\Psi.
\]
Since for $k\in\mathbb{N}_{>0}$ covariant $k$-tensors on the tangent
space $TM_{p}$ are elements in the (real) tensor product space $\bigotimes_{k}\left(TM_{p}\right)^{*}$
the inner product is induced by
\[
\left\langle \Phi_{0}\otimes\cdots\otimes\Phi_{k-1}|\Psi_{0}\otimes\cdots\otimes\Psi_{k-1}\right\rangle _{k,\left(TM_{p}\right)^{*}}=\left\langle \Phi_{0}|\Psi_{0}\right\rangle _{\left(TM_{p}\right)^{*}}\;\cdots\;\left\langle \Phi_{k-1}|\Psi_{k-1}\right\rangle _{\left(TM_{p}\right)^{*}}.
\]
In the sense of this inner product $\nabla^{*}=-\dive$ is the formal
adjoint of the co-variant derivative $\nabla.$ 

The complexity of the various physical phenomena has no influence
on the choice of $A$ but is reflected in different material laws%
\footnote{This is the opposite point of view to the ``conservation law'' perspective.%
}. The process of extraction of particular operators from the ``mother''
operator $\left(\partial_{0}\mathcal{M}\left(\partial_{0}^{-1}\right)+A\right)$
is surprisingly simple and amounts to projecting this operator (class)
down to smaller subspaces. The transparency and simplicity of this
construction is rather striking and shows the interconnectedness of
various different physical phenomena if inspected from a mathematical
point of view. This connection becomes obscured if a second order
(or higher order) model is chosen as a starting point. It appears
that a largely misguided pre-occupation with the occurrence of the
Laplacian%
\footnote{This is surely fostered by the comforting regularity properties of
elliptic (and parabolic) differential operators. In our perspective
these have their place when qualitative properties are of prominent
interest, but, as it turns out, have limited importance and are often
distractive for fundamental well-posedness issues. %
} in equations of mathematical physics has lead to a dominance of second-order
equations and systems in various models. As it turns out, however,
the first order approach leads to a unified and transparent access
to a large class (if not all) of typical linear model equations. 

In the applications we shall for sake of simplicity focus on the Cartesian
or periodic case ($M=\mathbb{R}^{n-k}\times\mathbb{T}^{k}$, $n=1,2,3$,
$k=0,1,2$,3, $k\leq n$, with $\mathbb{T}$ being the flat torus
obtained from the unit interval $[-1/2,1/2[$ by ``gluing'' the
end points together (implying periodicity boundary conditions on $]-1/2,1/2[$
in the last $k$ components), which is perfectly sufficient to understand
the reduction mechanism and its applicability.

\section{Mother Operator, Relatives and Descendants}

\subsection{A Construction Mechanism}
\begin{defn}
Let $C:D\left(C\right)\subseteq H_{0}\to H_{1}$ linear, closed and
densely defined and $B:H_{0}\to X$ a continuous linear mapping, $X,\, H_{0},\, H_{1}$
Hilbert spaces. We say $B$ is \emph{compatible with }$C$ if\end{defn}
\begin{itemize}
\item $CB^{*}$ is densely defined (in $X$).\end{itemize}
\begin{thm}
\label{thm:adjont_compatible}Let $C:D\left(C\right)\subseteq H_{0}\to H_{1}$
linear, closed and densely defined and $B:H_{0}\to X$ a continuous
linear mapping, $X,\, H_{0},\, H_{1}$ Hilbert spaces. Moreover, let
$B$ be compatible with $C.$ Then
\[
\left(CB^{*}\right)^{*}=\overline{BC^{*}}.
\]
\end{thm}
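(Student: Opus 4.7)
The plan is to compute the adjoint $(BC^*)^*$ first and to deduce the desired identity by one further duality step. This detour is attractive because the identity $(BC^*)^* = CB^*$ holds with no closure, whereas $(CB^*)^*$ genuinely requires the closure $\overline{BC^*}$.

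The main step is to verify that $CB^*$ is closed, and then to identify $(BC^*)^*$. For closedness: if $x_n \in D(CB^*)$ with $x_n \to x$ in $X$ and $CB^*x_n \to z$ in $H_1$, then continuity of $B^*$ gives $B^*x_n \to B^*x$, and closedness of $C$ then yields $B^*x \in D(C)$ with $CB^*x = z$. Together with the compatibility hypothesis, $CB^*$ is a closed, densely defined operator from $X$ to $H_1$. Next, for $w \in X$, observe that $w \in D((BC^*)^*)$ iff the functional $D(C^*) \ni y \mapsto \langle BC^*y, w\rangle_X = \langle C^*y, B^*w\rangle_{H_0}$ is continuous in the $H_1$-norm. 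By the definition of the adjoint of $C^*$ this is equivalent to $B^*w \in D(C^{**})$, and since $C$ is closed and densely defined, $C^{**} = C$; thus the condition reduces to $B^*w \in D(C)$, i.e., to $w \in D(CB^*)$. In that case $\langle C^*y, B^*w\rangle_{H_0} = \langle y, CB^*w\rangle_{H_1}$, and therefore $(BC^*)^* = CB^*$ (on the full domain $D(CB^*)$, with no closure).

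To finish, $BC^*$ is densely defined because its domain $D(C^*)$ is dense in $H_1$ (as $C$ is closed and densely defined). Its adjoint $(BC^*)^* = CB^*$ is also densely defined by hypothesis, so $BC^*$ is closable and $(BC^*)^{**} = \overline{BC^*}$. Combining the two facts yields $\overline{BC^*} = (BC^*)^{**} = (CB^*)^*$, as claimed. The principal obstacle is conceptual rather than technical: one might first try to compute $(CB^*)^*$ directly and struggle to produce the closure on the right by a graph-orthogonality argument in $H_1 \times X$, but computing the other adjoint first sidesteps this entirely, reducing the argument to the automatic closedness of $CB^*$ (thanks to $B^*$ being bounded and everywhere defined) and the standard biduality $T^{**} = \overline{T}$ for closable densely defined $T$.
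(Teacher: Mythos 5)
Your proof is correct and follows essentially the same route as the paper: both establish the exact identity $\left(BC^{*}\right)^{*}=CB^{*}$ (using $C^{**}=C$ and the boundedness of $B^{*}$) and then obtain the claim by taking adjoints once more via $\left(BC^{*}\right)^{**}=\overline{BC^{*}}$, with density of $D\left(C^{*}\right)$ and the compatibility hypothesis justifying the biduality step. The only cosmetic differences are that you verify closedness of $CB^{*}$ separately (which is automatic once it is identified as an adjoint) and phrase the domain identification as a single equivalence rather than as two inclusions.
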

\begin{proof}
It is 
\[
CB^{*}\subseteq\left(BC^{*}\right)^{*}.
\]
Let now $u\in D\left(\left(BC^{*}\right)^{*}\right)$ then for $v\in D\left(BC^{*}\right)=D\left(C^{*}\right)$:
\begin{align*}
\left\langle v|\left(BC^{*}\right)^{*}u\right\rangle  & =\left\langle BC^{*}v|u\right\rangle \\
 & =\left\langle C^{*}v|B^{*}u\right\rangle .
\end{align*}
We read off that
\[
B^{*}u\in D\left(C^{**}\right)=D\left(C\right)
\]
and 
\[
CB^{*}u=\left(BC^{*}\right)^{*}u.
\]
Consequently, we have 
\[
CB^{*}=\left(BC^{*}\right)^{*}
\]
and so
\[
\left(CB^{*}\right)^{*}=\left(BC^{*}\right)^{**}=\overline{BC^{*}}.\tag*{{\qedhere}}
\]
\end{proof}
\begin{defn}
Let $C:D\left(C\right)\subseteq H_{0}\to H_{1}$ linear, closed and
densely defined, $X,\, H_{0},\, H_{1}$ Hilbert spaces. Moreover,
let $B_{0}:H_{0}\to X$ be compatible with $C$ and $B_{1}:H_{1}\to Y$
be compatible with $C^{*}.$ Then we call $\overline{B_{1}C}B_{0}^{*}$
the \emph{$\left(B_{0},B_{1}\right)$-relative} (or simply a \emph{relative)}
of $C$. If not both of the mappings $B_{0},\: B_{1}$ are bijections,
then we call $\overline{B_{1}C}B_{0}^{*}$ the \emph{$\left(B_{0},B_{1}\right)$-descendant}
(or simply a \emph{descendant)} of $C$ (and $C$ the \emph{mother
}operator of $\overline{B_{1}C}B_{0}^{*}$). 
\end{defn}
Of particular interest are compatible operators resulting from orthogonal
projectors. We introduce the following schemes of notation:
\begin{defn}
Let $V$ be a closed subspace of a Hilbert space $H$. Then we denote
the orthogonal projector onto $V$ by $P_{V}.$
\end{defn}
~
\begin{defn}
Let $X\oplus Y$ a direct sum of Hilbert spaces $X,\: Y.$ Then we
denote the canonical projections
\[
\begin{array}{cccccc}
X\oplus Y & \to X,\quad & X\oplus Y & \to Y\\
x\oplus y & \mapsto x,\quad & x\oplus y & \mapsto y
\end{array}
\]
by $\pi_{X}$ and $\pi_{Y}$, respectively.
\end{defn}
These notations are employed in the following elementary observation,
which we record without giving the elementary proof.
\begin{prop}
Let $V$ be a closed subspace of a Hilbert space $H$. Then 
\begin{align*}
P_{V} & =\pi_{V}^{*}\pi_{V}.
\end{align*}
and 
\[
\left(1-P_{V}\right)=\pi_{V^{\perp}}^{*}\pi_{V^{\perp}}=P_{V^{\perp}}.
\]
Moreover, $\pi_{V}^{*}$ is the canonical isometric embedding of $V$
in $H$ and
\[
\pi_{V}\pi_{V}^{*}\mbox{ and }\pi_{V^{\perp}}\pi_{V^{\perp}}^{*}
\]
are the identities on $V$ and $V^{\perp},$ respectively. 
\end{prop}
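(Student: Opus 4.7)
The plan is to use the orthogonal decomposition $H = V \oplus V^\perp$, which is available because $V$ is closed. Under this identification the projection $\pi_V\colon H = V \oplus V^\perp \to V$ of the preceding definition simply sends $v \oplus w$ to $v$ for $v \in V$, $w \in V^\perp$, and analogously $\pi_{V^\perp}(v \oplus w) = w$. All the claims then follow by unwinding the definition of the Hilbert space adjoint.

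First I would compute $\pi_V^*$ from the defining relation of the adjoint. For $y \in V$ and arbitrary $v \oplus w \in H$,
\[
\left\langle \pi_V(v\oplus w),y\right\rangle_V = \left\langle v,y\right\rangle_V = \left\langle v\oplus w,\, y\oplus 0\right\rangle_H,
\]
where the last equality uses the orthogonality of $V$ and $V^\perp$ together with the definition of the inner product on the direct sum. This identifies $\pi_V^* y = y \oplus 0$, i.e.\ $\pi_V^*$ is the canonical isometric embedding of $V$ into $H$. The same argument applied to $V^\perp$ gives $\pi_{V^\perp}^* z = 0 \oplus z$.

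With these explicit formulas the remaining identities reduce to direct substitution. Indeed, $\pi_V^*\pi_V(v\oplus w) = \pi_V^*(v) = v \oplus 0 = P_V(v\oplus w)$, establishing the first equation, and similarly $\pi_{V^\perp}^*\pi_{V^\perp}(v\oplus w) = 0 \oplus w = P_{V^\perp}(v\oplus w)$. Summing these yields the identity on $H$, so $1-P_V = P_{V^\perp} = \pi_{V^\perp}^*\pi_{V^\perp}$. Finally, $\pi_V\pi_V^*(y) = \pi_V(y \oplus 0) = y$ for $y \in V$, showing $\pi_V\pi_V^* = \mathrm{id}_V$, and the analogous computation gives $\pi_{V^\perp}\pi_{V^\perp}^* = \mathrm{id}_{V^\perp}$.

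I do not expect any genuine obstacle; the entire argument is a careful bookkeeping exercise. The one point that warrants attention is the implicit identification of $H$ with $V \oplus V^\perp$ and, consequently, the fact that the direct sum inner product satisfies $\langle v\oplus w, y\oplus 0\rangle_H = \langle v,y\rangle_V$, which is what makes $\pi_V^*$ coincide with the isometric inclusion.
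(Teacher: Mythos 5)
Your proof is correct; the paper itself records this proposition ``without giving the elementary proof,'' and your argument---identifying $H$ with $V\oplus V^{\perp}$, computing $\pi_{V}^{*}y=y\oplus 0$ from the defining relation of the adjoint, and then verifying each identity by direct substitution---is exactly the standard bookkeeping the authors had in mind.
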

The subspace $V\oplus\left\{ 0\right\} $ of $V\oplus V^{\perp}$
is commonly identified with $V$. We shall, however, prefer to distinguish
$P_{V}$ and $\pi_{V}$, since the latter allows a proper formulation
of reducing an operator equation to a subspace by constructing appropriate
descendants.

\subsection{Evolutionary Operators and their Relatives}

The concepts introduced in the previous section extends naturally
to evolutionary problems as described in the introduction.

To be specific we consider a particular class of evolutionary problems
of the form:

\begin{equation}
\overline{\left(\partial_{0}\mathcal{M}\left(\partial_{0}^{-1}\right)+A\right)}U=F\label{eq:evo-1}
\end{equation}
where
\[
A\coloneqq\left(\begin{array}{cc}
0 & -C^{*}\\
C & 0
\end{array}\right)
\]
and $C:D\left(C\right)\subseteq H_{0}\to H_{1}$ is a closed densely
defined linear operator so that $A$ is skew-selfadjoint in the Hilbert
space $H\coloneqq H_{0}\oplus H_{1}$. We assume for the material
law that there is a $c_{0}\in\mathbb{R}_{>0}$ with
\begin{equation}
\Re\left\langle \chi_{_{\mathbb{R}_{<0}}}\left(m_{0}\right)U|\partial_{0}\mathcal{M}\left(\partial_{0}^{-1}\right)U\right\rangle _{\nu,0,0}\geq c_{0}\left\langle \chi_{_{\mathbb{R}_{<0}}}\left(m_{0}\right)U|U\right\rangle _{\nu,0,0}\label{eq:posdef-2}
\end{equation}
for all sufficiently large $\nu\in\mathbb{R}_{>0}$ and all $U\in D\left(\partial_{0}\right)\subseteq H_{\nu,0}\left(\mathbb{R},H\right)$.
This assumption warrants solvability and causality of the solution
operator, for this variant of the solution theory compare \cite{Picard2010,PAMM:PAMM201110333}.

In order to ensure (\ref{eq:posdef-2}) we may and will assume that
\begin{align*}
\mathcal{M}\left(\partial_{0}^{-1}\right) & =\mathcal{M}_{0}+\partial_{0}^{-1}\mathcal{M}^{\left(1\right)}\left(\partial_{0}^{-1}\right)
\end{align*}
with $\mathcal{M}_{0}\in L(H)$ selfadjoint%
\footnote{We denote by $L(H)$ the space continuous linear operators from $H$
into $H$. Moreover, we shall not notationally distinguish between
an operator $M\in L(H)$ and its (canonical) extension to the Hilbert
space of $H$-valued $H_{\rho,0}$-functions.%
} and $\pi_{\mathcal{M}_{0}\left[H\right]}\mathcal{M}_{0}\pi_{\mathcal{M}_{0}\left[H\right]}^{*},$
$\pi_{\left[\left\{ 0\right\} \right]\mathcal{M}_{0}}\Re\left(\mathcal{M}^{\left(1\right)}\left(\partial_{0}^{-1}\right)\right)\pi_{\left[\left\{ 0\right\} \right]\mathcal{M}_{0}}^{*}$
uniformly strictly positive definite for all sufficiently large $\nu\in\mathbb{R}_{>0}.$
If $\mathcal{M}_{1}\in L(H)$ 
\begin{align}
\mathcal{M}^{\left(1\right)}\left(\partial_{0}^{-1}\right) & =\mathcal{M}_{1}+\mathcal{M}^{\left(2\right)}\left(\partial_{0}^{-1}\right),\label{eq:shape}
\end{align}
$\mathcal{N}\coloneqq\pi_{\left[\left\{ 0\right\} \right]\mathcal{M}_{0}}\Re\left(\mathcal{M}_{1}\right)\pi_{\left[\left\{ 0\right\} \right]\mathcal{M}_{0}}^{*}$
is strictly positive definite and the operator norm on the space $H_{\nu,0}\left(\mathbb{R},\left[\left\{ 0\right\} \right]\mathcal{M}_{0}\right)$
satisfies
\begin{equation}
\left\Vert \left(\sqrt{\mathcal{N}}\right)^{-1}\pi_{\left[\left\{ 0\right\} \right]\mathcal{M}_{0}}\Re\mathcal{M}^{\left(2\right)}\left(\partial_{0}^{-1}\right)\pi_{\left[\left\{ 0\right\} \right]\mathcal{M}_{0}}^{*}\left(\sqrt{\mathcal{N}}\right)^{-1}\right\Vert <1\label{eq:small}
\end{equation}
for sufficiently large $\rho\in\mathbb{R}_{>0}$, then a standard
perturbation argument shows that (\ref{eq:posdef-2}) is maintained.
Since solution theory is not the topic of this paper we will not dwell
on these issues in the following. We merely note that the construction
of relatives and descendants maintains this solvability condition.

We note first that as a by-product of the above we have the following.
\begin{prop}
\label{prop:relative}Let $C:D\left(C\right)\subseteq H_{0}\to H_{1}$
linear, closed and densely defined. Let $B_{0}:H_{0}\to X$ be compatible
with $C$ and $B_{1}:H_{1}\to Y$ be compatible with $C^{*}$. Then
$B_{0}\oplus B_{1}$ is compatible with 
\[
A\coloneqq\left(\begin{array}{cc}
0 & -C^{*}\\
C & 0
\end{array}\right).
\]
Moreover, if $B_{0}^{\ast}$ has a bounded left-inverse and the set
$B_{0}^{\ast}[X]\cap D(C)$ is a core for $\overline{B_{1}C}|_{\overline{B_{0}^{\ast}[X]}}$
then the relative 
\begin{equation}
\overline{\left(B_{0}\oplus B_{1}\right)A}\left(B_{0}\oplus B_{1}\right)^{*}=\left(\begin{array}{cc}
0 & -\overline{B_{0}C^{*}}B_{1}^{*}\\
\overline{B_{1}C}B_{0}^{*} & 0
\end{array}\right)\label{eq:descent1}
\end{equation}
of $A$\textup{\emph{ is skew-selfadjoint in $X\oplus Y$.}} \end{prop}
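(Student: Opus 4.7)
For the compatibility claim I would unpack domains directly: $A(B_0 \oplus B_1)^*(x,y) = (-C^*B_1^*y, CB_0^*x)$ has natural domain $D(CB_0^*) \oplus D(C^*B_1^*)$, and both factors are dense by the assumed compatibilities of $B_0$ with $C$ and $B_1$ with $C^*$. Moreover, since the two off-diagonal blocks of $(B_0 \oplus B_1)A$ act on decoupled components of $H_0 \oplus H_1$, the closure can be computed entry-wise, giving
\[
\overline{(B_0 \oplus B_1)A} = \begin{pmatrix} 0 & -\overline{B_0 C^*}\\ \overline{B_1 C} & 0 \end{pmatrix},
\]
and composing with the bounded $B_0^*\oplus B_1^*$ on the right delivers formula (\ref{eq:descent1}).

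Next, write the right-hand side of (\ref{eq:descent1}) as $R = \begin{pmatrix} 0 & -T \\ S & 0 \end{pmatrix}$ with $T := \overline{B_0 C^*}B_1^*$ and $S := \overline{B_1 C}B_0^*$. Both $S$ and $T$ are closed, since a closed operator composed on the right with a bounded, everywhere defined one is closed. A direct block calculation of the adjoint yields $R^* = \begin{pmatrix} 0 & S^* \\ -T^* & 0 \end{pmatrix}$, so $R^* = -R$ reduces to the single identity $T^* = S$. To evaluate $T^*$, observe that $B_1$ is compatible with $\overline{B_0 C^*}$, because $\{y : B_1^*y \in D(C^*)\}$ is already a dense subset of $D(T)$; Theorem \ref{thm:adjont_compatible} thus gives $T^* = \overline{B_1(\overline{B_0 C^*})^*} = \overline{B_1(B_0 C^*)^*}$. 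A direct computation of the adjoint $(B_0 C^*)^*$, using that $C$ is closed and hence $C^{**} = C$, shows $(B_0 C^*)^* = CB_0^*$, so finally $T^* = \overline{B_1 CB_0^*}$.

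Everything therefore reduces to the identity $\overline{B_1 CB_0^*} = \overline{B_1 C}B_0^* = S$. The inclusion $\overline{B_1 CB_0^*} \subseteq \overline{B_1 C}B_0^*$ is routine from closedness of $\overline{B_1 C}$ and boundedness of $B_0^*$. For the reverse, I would pick $x \in D(\overline{B_1 C}B_0^*)$, so $B_0^*x \in D(\overline{B_1 C}) \cap B_0^*[X]$. The bounded left-inverse hypothesis for $B_0^*$ makes $B_0^*[X]$ closed, whence $\overline{B_0^*[X]} = B_0^*[X]$; the core hypothesis then supplies a sequence $u_n \in B_0^*[X]\cap D(C)$ with $u_n \to B_0^*x$ in $H_0$ and $B_1Cu_n \to \overline{B_1 C}B_0^*x$ in $Y$. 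Writing $u_n = B_0^*x_n$ with $x_n := Lu_n$ for $L$ the bounded left inverse, injectivity of $B_0^*$ makes this unambiguous and boundedness of $L$ gives $x_n \to x$ in $X$; since $B_1 CB_0^*x_n = B_1 Cu_n$ converges to $\overline{B_1 C}B_0^*x$, this places $x$ in $D(\overline{B_1 CB_0^*})$ with the correct image.

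The main obstacle, and the place where both extra hypotheses are indispensable, is exactly this last identity: the bounded left inverse of $B_0^*$ is what permits lifting an approximating sequence from $H_0$ back to $X$, while the core hypothesis is what places the approximants inside $B_0^*[X]\cap D(C)$ to begin with. Without either, the closure and the right-composition by $B_0^*$ need not commute, and skew-selfadjointness of the relative can fail.
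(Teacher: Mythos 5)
Your proposal is correct and follows essentially the same route as the paper's proof: reduce skew-selfadjointness to the identity $\left(\overline{B_{0}C^{*}}B_{1}^{*}\right)^{*}=\overline{B_{1}C}B_{0}^{*}$, compute the adjoint via Theorem \ref{thm:adjont_compatible} together with $\left(\overline{B_{0}C^{*}}\right)^{*}=\left(B_{0}C^{*}\right)^{*}=CB_{0}^{*}$ to get $\overline{B_{1}CB_{0}^{*}}$, and then establish $\overline{B_{1}CB_{0}^{*}}=\overline{B_{1}C}B_{0}^{*}$ by combining the core hypothesis (to produce approximants in $B_{0}^{*}[X]\cap D(C)$) with the bounded left inverse of $B_{0}^{*}$ (to lift them back to $X$). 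The only differences are cosmetic: you spell out the compatibility of $B_{0}\oplus B_{1}$ with $A$ and the block form of the adjoint, which the paper treats as clear.
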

\begin{proof}
The compatibility of $B_{0}\oplus B_{1}$ with $A$ is clear. To show
that the relative
\[
\left(\begin{array}{cc}
0 & -\overline{B_{0}C^{*}}B_{1}^{*}\\
\overline{B_{1}C}B_{0}^{*} & 0
\end{array}\right)
\]
of $A$ is again skew-selfadjoint, we have to verify that 
\[
\left(\overline{B_{0}C^{\ast}}B_{1}^{\ast}\right)^{\ast}=\overline{B_{1}C}B_{0}^{\ast}.
\]
Using Theorem \ref{thm:adjont_compatible} for $B_{0}C^{*}$ and $B_{1}$
and the relation $\left(\overline{B_{0}C^{*}}\right)^{*}=\left(B_{0}C^{*}\right)^{*}=CB_{0}^{*}$,
we obtain that 
\[
\left(\overline{B_{0}C^{\ast}}B_{1}^{\ast}\right)^{\ast}=\overline{B_{1}\left(B_{0}C^{\ast}\right)^{\ast}}=\overline{B_{1}CB_{0}^{\ast}},
\]
and thus, it suffices to show $\overline{B_{1}CB_{0}^{\ast}}=\overline{B_{1}C}B_{0}^{\ast}.$
Obviously, $\overline{B_{1}CB_{0}^{\ast}}\subseteq\overline{B_{1}C}B_{0}^{\ast}.$
To see the missing inclusion, let $u\in D\left(\overline{B_{1}C}B_{0}^{\ast}\right),$
i.e. $B_{0}^{\ast}u\in D\left(\overline{B_{1}C}\right).$ By assumption,
there exists a sequence $(x_{n})_{n\in\mathbb{N}}$ in $X$ such that
$B_{0}^{\ast}x_{n}\in D(C)$ for each $n\in\mathbb{N}$ and 
\begin{align*}
B_{0}^{\ast}x_{n} & \to B_{0}^{\ast}u,\\
B_{1}CB_{0}^{\ast}x_{n} & \to\overline{B_{1}C}B_{0}^{\ast}u,
\end{align*}
as $n\to\infty.$ Using, that $B_{0}^{\ast}$ has a bounded left inverse,
we derive that $x_{n}\to u$ as $n\to\infty$ and hence, $u\in D\left(\overline{B_{1}CB_{0}^{\ast}}\right)$,
showing the missing inclusion.\end{proof}
\begin{rem}
~\end{rem}
\begin{enumerate}
\item If $B_{0}^{\ast}$ is onto and continuously invertible, then the assumptions
of the latter proposition are trivially satisfied.
\item The structure of $A$ as $\left(\begin{array}{cc}
0 & -C^{*}\\
C & 0
\end{array}\right)$ also implies that the operators $\overline{\left(\partial_{0}\mathcal{M}\left(\partial_{0}^{-1}\right)+A\right)}$
and $\overline{\left(\partial_{0}\tilde{\mathcal{M}}\left(\partial_{0}^{-1}\right)-A\right)}$
with 
\[
\left(\begin{array}{cc}
1 & 0\\
0 & -1
\end{array}\right)\mathcal{M}\left(\partial_{0}^{-1}\right)\left(\begin{array}{cc}
1 & 0\\
0 & -1
\end{array}\right)=\tilde{\mathcal{M}}\left(\partial_{0}^{-1}\right)
\]
are relatives via $\left(\begin{array}{cc}
1 & 0\\
0 & -1
\end{array}\right)$ as a unitary mapping. 
\item The construction of descendants may be repeated but the result will
in general depend on the order in which the steps are carried out.
\end{enumerate}
We note that the proof of Proposition \ref{prop:relative} mainly
relies on the fact that $\overline{B_{1}CB_{0}^{\ast}}=\overline{B_{1}C}B_{0}^{\ast},$
which follows by the additional assumptions on $B_{0}^{\ast}$. To
show that in general, such an equality cannot be expected, we present
the following example due to \cite{Jeremias}.
\begin{example}
Let $H_{0},H_{1}$ be two Hilbert spaces, where $H_{1}$ is assumed
to be separable. In $H_{0}$ we choose two closed, densely defined
operators $A_{0},A_{1}$ such that $A_{0}\subsetneq A_{1}$ and $x_{1}\in D(A_{0})^{\bot_{D_{A_{1}}}}$,
where the ortho-complement is taken in $D(A_{1})$ with respect to
the graph inner product of $A_{1}$. Moreover, let $(y_{n})_{n\in\mathbb{N}}$
a linear independent total sequence in $H_{1}$ with $y_{n}\to0.$
We define the following operator on $H_{1}$ as the linear extension
of the mapping:
\begin{eqnarray*}
R:\left\{ y_{n}\,|\, n\in\mathbb{N}\right\} \subseteq H_{1} & \to & \ell_{2}(\mathbb{N})\\
y_{k} & \mapsto & 2^{k}e_{k},
\end{eqnarray*}
where $e_{k}$ denotes the $k$-th unit vector in $\ell_{2}(\mathbb{N})$.
We denote its extension again by $R$. This operator turns out to
be closable. Indeed, let $\left(z_{n}\right)_{n\in\mathbb{N}}$ be
a sequence in $\Span\left\{ y_{n}\,|\, n\in\mathbb{N}\right\} $ with
$z_{n}\to0$ and $Rz_{n}\to z\in\ell_{2}(\mathbb{N})$ as $n\to\infty$.
For each $n\in\mathbb{N}$ there exists a sequence $(\lambda_{k}^{n})_{k\in\mathbb{N}}$
of complex numbers with almost all entries being zero such that 
\[
z_{n}=\sum_{k=0}^{\infty}\lambda_{k}^{n}y_{k}.
\]
Since $z_{n}\to0$ we obtain $\lambda_{k}^{n}\to0$ as $n\to\infty$
for each $k\in\mathbb{N}.$ The latter yields 
\[
\langle Rz_{n}|e_{j}\rangle_{\ell_{2}}=2^{j}\lambda_{j}^{n}\to0
\]
 as $n\to\infty.$ This shows $z=0$ and thus, $R$ is closable. Consider
now the operator $Q$ defined as 
\begin{eqnarray*}
Q:\Span\left\{ (x_{1},y_{n})\,|\, n\in\mathbb{N}\right\} \subseteq\Span\left\{ x_{1}\right\} \oplus H_{1} & \to & \ell_{2}(\mathbb{N})\\
(w,z) & \mapsto & Rz.
\end{eqnarray*}
Again, this operator is closable, which follows from the closability
of $R$. Then $(x_{1},0)\notin D(\overline{Q}).$ Indeed, let $\left((w_{n},z_{n})\right)_{n\in\mathbb{N}}$
be a sequence in $\Span\left\{ (x_{1},y_{n})\,|\, n\in\mathbb{N}\right\} $
with $w_{n}\to x_{1}$ and $z_{n}\to0.$ For each $n\in\mathbb{N}$
there exists a finite sequence $(\lambda_{k}^{n})_{k\in\mathbb{N}}$
of complex scalars such that 
\[
w_{n}=\sum_{k=0}^{\infty}\lambda_{k}^{n}x_{1}\quad\mbox{and}\quad z_{n}=\sum_{k=0}^{\infty}\lambda_{k}^{n}y_{k}.
\]
Since $z_{n}\to0$ we get $\lambda_{k}^{n}\to0$ as $n\to\infty$
and thus, 
\[
\bigwedge_{m\in\mathbb{N}}\,\bigvee_{n_{0}\in\mathbb{N}}\,\bigwedge_{n\in\mathbb{N}_{\geq n_{0}}}\sum_{k=0}^{m}|\lambda_{k}^{n}|\leq\frac{1}{2^{m}}.
\]
Moreover, since $w_{n}\to x_{1}$ we derive $\sum_{k=0}^{\infty}\lambda_{k}^{n}\to1$
as $n\to\infty,$ which yields 
\[
\bigvee_{n_{1}\in\mathbb{N}}\,\bigwedge_{n\in\mathbb{N}_{\geq n_{1}}}\sum_{k=0}^{\infty}|\lambda_{k}^{n}|\geq\frac{1}{2}.
\]
Let $m\in\mathbb{N}$ and $n\geq\max\{n_{0},n_{1}\}$. Then 
\begin{eqnarray*}
|Rz_{n}|_{\ell_{2}}^{2} & = & \sum_{k=0}^{\infty}2^{2k}\left|\lambda_{k}^{n}\right|^{2}\\
 & = & \sum_{k=0}^{m}2^{2k}\left|\lambda_{k}^{n}\right|^{2}+\sum_{k=m+1}^{\infty}2^{2k}|\lambda_{k}^{n}|^{2}\\
 & \geq & 2^{2\left(m+1\right)}\sum_{k=0}^{\infty}2^{2k}|\lambda_{k+m+1}^{n}|^{2}\\
 & \geq & 2^{2\left(m+1\right)}\frac{3}{4}\left(\sum_{k=m+1}^{\infty}|\lambda_{k}^{n}|\right)^{2}\\
 & = & 2^{2\left(m+1\right)}\frac{3}{4}\left(\sum_{k=0}^{\infty}|\lambda_{k}^{n}|-\sum_{k=0}^{m}|\lambda_{k}^{n}|\right)^{2}\\
 & \geq & 2^{2\left(m+1\right)}\frac{3}{4}\left(\frac{1}{2}-\frac{1}{2^{m}}\right)^{2}\to\infty\quad(m\to\infty).
\end{eqnarray*}
This shows that $\left(Rz_{n}\right)_{n\in\mathbb{N}}$ has an unbounded
subsequence and hence, cannot converge. Thus, $(x_{1},0)\notin D(\overline{Q})$,
which in particular implies that $(x,0)\in D(\overline{Q})$ implies
$x=0.$ \\
Now, we define the operator
\begin{eqnarray*}
A:\left[D(A_{0})\times\{0\}\right]+\left[D(\overline{Q})\right]\subseteq H_{0}\oplus H_{1} & \to & H_{0}\oplus\ell_{2}(\mathbb{N})\\
\left(x,0\right)+(y,z) & \mapsto & \left(A_{1}(x+y),\overline{Q}(y,z)\right).
\end{eqnarray*}
Note that $x+y\in D(A_{1})$ since $y\in\Span\{x_{1}\}.$ We show
that $A$ is closed. To this end, let $\left((x_{n}+y_{n},z_{n})\right)_{n\in\mathbb{N}}$
be a $H_{0}\oplus H_{1}$-convergent sequence in $D(A)$ such that
$\left(A_{1}\left(x_{n}+y_{n}\right)\right)_{n\in\mathbb{N}}$ and
$\left(\overline{Q}(y_{n},z_{n})\right)_{n\in\mathbb{N}}$ converges
in $H_{0}$ and $\ell_{2}(\mathbb{N})$, respectively. Then $\left(x_{n}+y_{n}\right)_{n\in\mathbb{N}}$
converges in $D(A_{1})$ with respect to the graph norm of $A_{1}$,
i.e. in $D_{A_{1}}$, to some $c\in D(A_{1})$. From $x_{n}\in D(A_{0})$
and $y_{n}\in D(A_{0})^{\bot_{D_{A_{1}}}}$ we deduce that both $\left(x_{n}\right)_{n\in\mathbb{N}}$
and $\left(y_{n}\right)_{n\in\mathbb{N}}$ converge in $D_{A_{1}}$
to some $x\in D(A_{1})$ and $y\in D(A_{1})$, respectively. Since
$A_{0}\subseteq A_{1}$ is closed, we infer $x\in D(A_{0})$. Now,
from the convergence of $\left(y_{n}\right)_{n}$, $\left((x_{n}+y_{n},z_{n})\right)_{n\in\mathbb{N}}$
and $\left(\overline{Q}(y_{n},z_{n})\right)_{n\in\mathbb{N}}$ together
with the closedness of $\overline{Q}$, we infer that $\left((y_{n},z_{n})\right)_{n\in\mathbb{N}}$
converges in $D(\overline{Q})$ to $(y,z)\in D(\overline{Q})$ with
respect to the graph norm of $\overline{Q}$ for some $z\in\ell_{2}(\mathbb{N})$.
Summarizing, we have $x_{n}+y_{n}\to x+y$ in $D_{A_{1}}$ as $n\to\infty$
with $x\in D(A_{0})$ and $y\in\Span\{x_{1}\}$ as well as $(y_{n},z_{n})\to(y,z)$
in $D_{\overline{Q}}$ as $n\to\infty$. Thus $A$ is closed. \\
We define 
\begin{eqnarray*}
B:H_{0} & \to & H_{0}\oplus H_{1}\\
x & \mapsto & (x,0)
\end{eqnarray*}
and 
\begin{eqnarray*}
C:H_{0}\oplus\ell_{2}(\mathbb{N}) & \to & H_{0}\\
(x,y) & \mapsto & x.
\end{eqnarray*}
Then 
\[
CAB=A_{0}.
\]
Indeed, let $x\in D(CAB).$ The latter yields that $(x,0)\in D(A).$
By the definition of $A$ we have that $(x-x_{0},0)\in D(\overline{Q})$
for some $x_{0}\in D(A_{0}).$ By what we have shown above, this holds
if and only if $x=x_{0}$ and thus $x\in D(A_{0}).$ The other inclusion
holds trivially. \\
We show that $x_{1}\in D(\overline{CA}B).$ The latter is equivalent
to $(x_{1},0)\in D\left(\overline{CA}\right).$ For each $n\in\mathbb{N}$
we have that $(x_{1},y_{n})\in D(Q)\subseteq D(CA).$ Since $(x_{1},y_{n})\to(x_{1},0)$
and $CA(x_{1},y_{n})=A_{1}x_{1}$ for each $n\in\mathbb{N}$ we obtain
$(x_{1},0)\in D(\overline{CA}).$ Thus, 
\[
\overline{CAB}=CAB\subsetneq\overline{CA}B.
\]
Note that with $H_{1}=\ell_{2}(\mathbb{N})$, we even have $B=C^{*}$
and thus $\overline{CAC^{*}}=CAC^{*}\subsetneq\overline{CA}C^{*}.$
\end{example}
Applying earlier observations to evolutionary operators yields the
following result.
\begin{thm}
Let $C:D\left(C\right)\subseteq H_{0}\to H_{1}$ linear, closed and
densely defined, and let $B_{0}:H_{0}\to X$ be compatible with $C$
and $B_{1}:H_{1}\to Y$ be compatible with $C^{*}$. Moreover, let
$B_{0}^{*}$ have a bounded left inverse and let $B_{0}^{\ast}[X]\cap D(C)$
be a core for $\overline{B_{1}C}|_{\overline{B_{0}^{\ast}[X]}}$ 
\begin{align*}
A & \coloneqq\left(\begin{array}{cc}
0 & -C^{*}\\
C & 0
\end{array}\right).
\end{align*}
Then $\overline{\left(B_{0}\oplus B_{1}\right)A}\left(B_{0}\oplus B_{1}\right)^{*}$
is skew-selfadjoint and 
\[
\overline{\partial_{0}\left(B_{0}\oplus B_{1}\right)\mathcal{M}\left(\partial_{0}^{-1}\right)\left(B_{0}\oplus B_{1}\right)^{*}+\overline{B_{0}\oplus B_{1}A}\left(B_{0}\oplus B_{1}\right)^{*}}
\]
is an \emph{(evolutionary)} relative of $\overline{\left(\partial_{0}\mathcal{M}\left(\partial_{0}^{-1}\right)+A\right)}$. 
\end{thm}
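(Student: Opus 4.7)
The plan is to reduce the statement to Proposition \ref{prop:relative} together with a check that the structural data of an evolutionary operator are preserved under conjugation by the bounded map $B_{0}\oplus B_{1}$. First, I would verify that $B_{0}\oplus B_{1}\colon H_{0}\oplus H_{1}\to X\oplus Y$ is compatible with $A$. Writing $A(B_{0}\oplus B_{1})^{*}=\bigl(\begin{smallmatrix}0 & -C^{*}\\ C & 0\end{smallmatrix}\bigr)\bigl(\begin{smallmatrix}B_{0}^{*} & 0\\ 0 & B_{1}^{*}\end{smallmatrix}\bigr)$, the compatibility hypotheses for $B_{0}$ with $C$ and $B_{1}$ with $C^{*}$ give that both $CB_{0}^{*}$ and $C^{*}B_{1}^{*}$ are densely defined, hence so is $A(B_{0}\oplus B_{1})^{*}$ in $X\oplus Y$.

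With this compatibility in hand, the extra hypotheses on $B_{0}^{*}$ (bounded left inverse) and the core condition for $\overline{B_{1}C}\bigr|_{\overline{B_{0}^{*}[X]}}$ are precisely those needed to invoke Proposition \ref{prop:relative}. This gives at once the block representation \eqref{eq:descent1} and the skew-selfadjointness of $\tilde{A}\coloneqq\overline{(B_{0}\oplus B_{1})A}(B_{0}\oplus B_{1})^{*}$, settling the first assertion.

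For the second assertion I would define the transported material law by
\[
\tilde{\mathcal{M}}(\partial_{0}^{-1})\coloneqq(B_{0}\oplus B_{1})\,\mathcal{M}(\partial_{0}^{-1})\,(B_{0}\oplus B_{1})^{*}
\]
and observe that, since $B_{0}\oplus B_{1}$ is bounded and acts only in the spatial fibre, it commutes with the $\partial_{0}$-calculus. Consequently $\tilde{\mathcal{M}}$ inherits analyticity and boundedness on the ball $B_{\mathbb{C}}(r,r)$ from $\mathcal{M}$, and the identity $\partial_{0}\tilde{\mathcal{M}}(\partial_{0}^{-1})=(B_{0}\oplus B_{1})\partial_{0}\mathcal{M}(\partial_{0}^{-1})(B_{0}\oplus B_{1})^{*}$ holds on $D(\partial_{0})$. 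Combining with the skew-selfadjoint $\tilde{A}$ of the previous step, the operator
\[
\overline{\partial_{0}\tilde{\mathcal{M}}(\partial_{0}^{-1})+\tilde{A}}
\]
is therefore of exactly the evolutionary form \eqref{eq:evo-1} relative to $\tilde{A}$, i.e.\ an evolutionary relative of $\overline{(\partial_{0}\mathcal{M}(\partial_{0}^{-1})+A)}$.

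The only genuinely delicate point, and the one I expect to be the main obstacle, is justifying the interchange of closures between the (unbounded, time-only) operator $\partial_{0}\tilde{\mathcal{M}}(\partial_{0}^{-1})$ and the (unbounded, spatial) operator $\tilde{A}$ so that the displayed sum actually matches the conjugate $(B_{0}\oplus B_{1})\overline{(\partial_{0}\mathcal{M}(\partial_{0}^{-1})+A)}(B_{0}\oplus B_{1})^{*}$ after closure. This is resolved by the same mechanism as in Proposition \ref{prop:relative}: the bounded left inverse of $B_{0}^{*}$ transports approximating sequences back and forth without loss, and the core assumption ensures that the spatial closure already captures all limits, while the $\partial_{0}$-part leaves the domain $D(\partial_{0})$ stable under the bounded conjugation. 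Everything else is then bookkeeping.
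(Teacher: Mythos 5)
Your proposal is correct and follows essentially the route the paper intends: the paper offers no separate argument for this theorem, presenting it as an immediate application of Proposition \ref{prop:relative} (which yields the skew-selfadjointness of $\overline{\left(B_{0}\oplus B_{1}\right)A}\left(B_{0}\oplus B_{1}\right)^{*}$ under exactly the stated hypotheses), and your reduction does the same. The remaining assertion is essentially definitional once the transported material law $\left(B_{0}\oplus B_{1}\right)\mathcal{M}\left(\partial_{0}^{-1}\right)\left(B_{0}\oplus B_{1}\right)^{*}$ is written down, so your final worry about matching closures with the conjugated mother operator is more than the statement actually demands.
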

Unitary equivalence of $A$ would be a typical example illustrating
the previous theorem. More interesting, however, are descendants being
produced by projections onto proper subspaces. 
\begin{rem}
In contrast to the last result general relatives of $\overline{\left(\partial_{0}\mathcal{M}\left(\partial_{0}^{-1}\right)+A\right)}$
need not maintain its clear formal structure. Indeed, a few elementary
row and and column operations can produce almost impenetrably confusing
``model equations''. In many instances it turns out to be the main
task to reconstruct $\left(\partial_{0}\mathcal{M}\left(\partial_{0}^{-1}\right)+A\right)$
from a quite different looking (often only a formal) relative. 
\end{rem}

\subsection{Reducing Equations to Subspaces}

\subsubsection{The General Construction}

Let now $V$ be a closed subspace such that 
\[
P_{V}\mathcal{M}\left(\partial_{0}^{-1}\right)=\mathcal{M}\left(\partial_{0}^{-1}\right)P_{V}.
\]
 Then, applying $\pi_{V}$ to equation (\ref{eq:evo-1}) we obtain
similarly as before 
\begin{equation}
\partial_{0}\left(\pi_{V}\mathcal{M}\left(\partial_{0}^{-1}\right)\pi_{V}^{*}\right)\pi_{V}u+\pi_{V}A\left(\pi_{V}^{*}\pi_{V}u+\pi_{V^{\perp}}^{*}\pi_{V^{\perp}}u\right)=\pi_{V}F.\label{eq:evo:cut}
\end{equation}
This is now an equation for $\pi_{V}u$ in $V$ with little chance
of being well-posed (due to the free floating part $\pi_{V^{\perp}}^{*}\pi_{V^{\perp}}u$).
To enforce well-posedness, we could assume that $\pi_{V}$ and $\pi_{V}^{*}$
are compatible with $A$ and instead (assuming $\pi_{V^{\perp}}u=0$)
consider
\[
\left(\overline{\partial_{0}\left(\pi_{V}\mathcal{M}\left(\partial_{0}^{-1}\right)\pi_{V}^{*}\right)+\left(\overline{\pi_{V}A}\pi_{V}^{*}\right)}\right)\pi_{V}u=\pi_{V}F.
\]

In general, however, $\overline{\pi_{V}A}\pi_{V}^{*}$ will fail to
be skew-selfadjoint although 
\[
\pi_{V}\mathcal{M}\left(\partial_{0}^{-1}\right)\pi_{V}^{*}
\]
inherits its positive definiteness property from $\mathcal{M}\left(\partial_{0}^{-1}\right)$:
\\
There is a $c_{0}\in\mathbb{R}_{>0}$ with
\begin{equation}
\begin{array}{rl}
\Re\left\langle \chi_{_{\mathbb{R}_{<0}}}\left(m_{0}\right)U|\partial_{0}\pi_{V}\mathcal{M}\left(\partial_{0}^{-1}\right)\pi_{V}^{*}u\right\rangle _{\nu,0,0} & \geq c_{0}\left\langle \chi_{_{\mathbb{R}_{<0}}}\left(m_{0}\right)\pi_{V}^{*}u|\pi_{V}^{*}u\right\rangle _{\nu,0,0}\\
 & \qquad\qquad=c_{0}\left\langle \chi_{_{\mathbb{R}_{<0}}}\left(m_{0}\right)u|u\right\rangle _{\nu,0,0}
\end{array}\label{eq:posdef-1}
\end{equation}
for all sufficiently large $\nu\in\mathbb{R}_{>0}$ and all $u\in D\left(\partial_{0}\right)\subseteq H_{\nu}\left(\mathbb{R},V\right)$. 

But if $V=V_{0}\oplus V_{1}$ with $V_{0}=H_{0}$ or $V_{1}=H_{1}$,
where $\pi_{V_{0}}$ is compatible with $C$ and $\pi_{V_{1}}$ is
compatible with $C^{*},$ we see from our earlier considerations that
we have an evolutionary descendant with $\overline{\pi_{V}A}\pi_{V}^{*}$
skew-selfadjoint. We summarize this observation in our next theorem.
\begin{thm}
Let $V_{0}\subseteq H_{0}$, $V_{1}\subseteq H_{1}$ be closed subspaces
such that $\pi_{V_{0}}$ is compatible with $C$ and $\pi_{V_{1}}$
is compatible with $C^{*}.$ Then with $V\coloneqq V_{0}\oplus H_{1}$
or $V\coloneqq H_{0}\oplus V_{1}$ we have that the operator $\left(\overline{\partial_{0}\left(\pi_{V}\mathcal{M}\left(\partial_{0}^{-1}\right)\pi_{V}^{*}\right)+\left(\overline{\pi_{V}A}\pi_{V}^{*}\right)}\right)$
is the evolutionary $\pi_{V}$-descendant of $\overline{\left(\partial_{0}\mathcal{M}\left(\partial_{0}^{-1}\right)+A\right)}$.
\end{thm}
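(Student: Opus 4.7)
The plan is to interpret the statement as a direct corollary of the preceding theorem on evolutionary relatives, applied with a carefully chosen factorisation $\pi_V = B_0 \oplus B_1$. Concretely, in the case $V = V_0 \oplus H_1$ I would set $B_0 := \pi_{V_0}$ and $B_1 := \mathrm{id}_{H_1}$, and in the case $V = H_0 \oplus V_1$ I would set $B_0 := \mathrm{id}_{H_0}$ and $B_1 := \pi_{V_1}$. Either way, $(B_0 \oplus B_1)\mathcal{M}(\partial_0^{-1})(B_0 \oplus B_1)^* = \pi_V \mathcal{M}(\partial_0^{-1})\pi_V^*$ and $\overline{(B_0 \oplus B_1)A}(B_0 \oplus B_1)^* = \overline{\pi_V A}\pi_V^*$, so the evolutionary relative produced by the preceding theorem is precisely the operator displayed in the claim.

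Next I would verify the hypotheses of that theorem. The compatibility conditions hold: $\pi_{V_0}$ is compatible with $C$ (resp.\ $\pi_{V_1}$ with $C^*$) by assumption, and the identity is trivially compatible with any closed densely defined operator. The bounded left inverse of $B_0^*$ is immediate as well: in the first case $\pi_{V_0}$ is a bounded left inverse of $\pi_{V_0}^*$ thanks to $\pi_{V_0}\pi_{V_0}^* = \mathrm{id}_{V_0}$, and in the second case $B_0^*$ is itself the identity. The delicate point of the preceding theorem is the core condition, but here it collapses. In the first case $B_0^*[V_0] \cap D(C) = V_0 \cap D(C)$, and since $\overline{B_1 C} = C$ and $\overline{B_0^*[V_0]} = V_0$, the restriction $C|_{V_0}$ is already closed (as the preimage of a closed set under the embedding $V_0 \hookrightarrow H_0$) with domain $V_0 \cap D(C)$; its whole domain is trivially a core. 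In the second case $B_0^*[H_0] \cap D(C) = D(C)$ is the domain of $\pi_{V_1}C$ itself and hence, by definition, a core for its closure.

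With all hypotheses verified, the preceding theorem yields the skew-selfadjointness of $\overline{\pi_V A}\pi_V^*$ and identifies the operator in question as an evolutionary relative. To upgrade this to a \emph{descendant}, I would note that $B_0 \oplus B_1$ is a bijection if and only if $V_0 = H_0$ (resp.\ $V_1 = H_1$); in any nontrivial case the factor $\pi_{V_0}$ (resp.\ $\pi_{V_1}$) has non-zero kernel $V_0^\perp$ (resp.\ $V_1^\perp$) and is therefore not bijective, which by the earlier definition qualifies the resulting operator as a descendant of $\overline{(\partial_0 \mathcal{M}(\partial_0^{-1}) + A)}$. The only genuine obstacle one might anticipate — the core hypothesis — is rendered essentially vacuous by the very structure of the factorisation in which one summand is the identity; this is what allows the clean reduction-to-subspace statement.
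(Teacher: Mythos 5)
Your proposal is correct and follows essentially the route the paper intends: the theorem is presented as a summary of the "earlier considerations," i.e.\ an application of the preceding theorem on evolutionary relatives with the factorisation $\pi_{V}=\pi_{V_{0}}\oplus\mathrm{id}_{H_{1}}$ (resp.\ $\mathrm{id}_{H_{0}}\oplus\pi_{V_{1}}$), which is exactly your choice of $B_{0}\oplus B_{1}$. Your explicit verification that the bounded-left-inverse and core hypotheses become trivial when one summand is the identity (and your remark on when the relative is genuinely a descendant) fills in details the paper leaves implicit, but adds no new idea.
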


\subsubsection{A Particular Case: Removing Null Spaces}

To see the above construction at work let us consider the standard
issue of reducing $A$ to the ortho-complement of its kernel for the
operator 
\[
\left(\partial_{0}\mathcal{M}\left(\partial_{0}^{-1}\right)+A\right).
\]
We assume for sake of definiteness that 
\[
\mathcal{M}\left(\partial_{0}^{-1}\right)=\mathcal{M}_{0}+\partial_{0}^{-1}\mathcal{M}_{1}\left(\partial_{0}^{-1}\right)
\]
for a selfadjoint $\mathcal{M}_{0}\in L(H)$ and a $L(H)$-valued
analytic function $\mathcal{M}_{1}$ for $H$ being the underlying
(spatial) Hilbert space. We have that 
\[
\left(\partial_{0}\left(\pi_{\overline{A\left[H\right]}}\mathcal{M}\left(\partial_{0}^{-1}\right)\pi_{\overline{A\left[H\right]}}^{*}\right)+\pi_{\overline{A\left[H\right]}}A\pi_{\overline{A\left[H\right]}}^{*}\right)
\]
and
\[
\left(\partial_{0}\left(\pi_{\left[\left\{ 0\right\} \right]A}\mathcal{M}\left(\partial_{0}^{-1}\right)\pi_{\left[\left\{ 0\right\} \right]A}^{*}\right)\right)
\]
are two corresponding relatives. It is $\pi_{\overline{A\left[H\right]}}A\pi_{\overline{A\left[H\right]}}^{*}$
skew-selfadjoint, since $\overline{A\left[H\right]}$ is a reducing
subspace of $A$. Note that here $\pi_{\overline{A\left[H\right]}}A$
is already closed and $\overline{\pi_{\left[\left\{ 0\right\} \right]A}A}=0$.
Moreover, 
\[
\pi_{\overline{A\left[H\right]}}=\pi_{\overline{C^{*}\left[H_{1}\right]}}\oplus\pi_{\overline{C\left[H_{0}\right]}}
\]
and 
\[
\pi_{\left[\left\{ 0\right\} \right]A}=\pi_{\left[\left\{ 0\right\} \right]C}\oplus\pi_{\left[\left\{ 0\right\} \right]C^{*}.}
\]
Clearly, $\pi_{\left[\left\{ 0\right\} \right]C}$ and $\pi_{\overline{C^{*}\left[H_{1}\right]}}$
are compatible with $C$ and correspondingly $\pi_{\left[\left\{ 0\right\} \right]C^{*}}$
and $\pi_{\overline{C\left[H_{0}\right]}}$ are compatible with $C^{*}.$
So, if at least one of the null spaces $\left[\left\{ 0\right\} \right]C$
or $\left[\left\{ 0\right\} \right]C^{*}$ is non-trivial, which is
the only interesting case, we have that $\left(\partial_{0}\left(\pi_{\overline{A\left[H\right]}}\mathcal{M}\left(\partial_{0}^{-1}\right)\pi_{\overline{A\left[H\right]}}^{*}\right)+\pi_{\overline{A\left[H\right]}}A\pi_{\overline{A\left[H\right]}}^{*}\right)$
and $\left(\partial_{0}\left(\pi_{\left[\left\{ 0\right\} \right]A}\mathcal{M}\left(\partial_{0}^{-1}\right)\pi_{\left[\left\{ 0\right\} \right]A}^{*}\right)\right)$
are indeed descendants of $\left(\partial_{0}\mathcal{M}\left(\partial_{0}^{-1}\right)+A\right).$
How can these descendants help in solving a problem for the mother
operator?

To simplify calculations we first confirm that we may assume that
$\mathcal{M}_{0}$ can be replaced by $\pi_{\mathcal{M}_{0}\left[H\right]}^{*}\pi_{\mathcal{M}_{0}\left[H\right]}=P_{\mathcal{M}_{0}\left[H\right]}$.
Indeed,
\[
H=\mathcal{M}_{0}\left[H\right]\oplus\left[\left\{ 0\right\} \right]\mathcal{M}_{0}
\]
and 
\[
\mathcal{M}_{0}=\pi_{\mathcal{M}_{0}\left[H\right]}\mathcal{M}_{0}\pi_{\mathcal{M}_{0}\left[H\right]}^{*}\oplus0_{\left[\left\{ 0\right\} \right]\mathcal{M}_{0}}
\]
With
\begin{align*}
\tilde{\mathcal{M}}_{0} & =\pi_{\mathcal{M}_{0}\left[H\right]}\mathcal{M}_{0}\pi_{\mathcal{M}_{0}\left[H\right]}^{*}\oplus\pi_{\left[\left\{ 0\right\} \right]\mathcal{M}_{0}}\pi_{\left[\left\{ 0\right\} \right]\mathcal{M}_{0}}^{*}
\end{align*}
we obtain
\begin{align*}
\sqrt{\tilde{\mathcal{M}}_{0}^{-1}}\mathcal{M}_{0}\sqrt{\tilde{\mathcal{M}}_{0}^{-1}} & =\sqrt{\tilde{\mathcal{M}}_{0}^{-1}}\sqrt{\tilde{\mathcal{M}}_{0}}\left(1_{\mathcal{M}_{0}\left[H\right]}\oplus0_{\left[\left\{ 0\right\} \right]\mathcal{M}_{0}}\right)\left(1_{\mathcal{M}_{0}\left[H\right]}\oplus0_{\left[\left\{ 0\right\} \right]\mathcal{M}_{0}}\right)\sqrt{\tilde{\mathcal{M}}_{0}}\sqrt{\tilde{\mathcal{M}}_{0}^{-1}}\\
 & =\left(1_{\mathcal{M}_{0}\left[H\right]}\oplus0_{\left[\left\{ 0\right\} \right]\mathcal{M}_{0}}\right)=\pi_{\mathcal{M}_{0}\left[H\right]}^{*}\pi_{\mathcal{M}_{0}\left[H\right]}.
\end{align*}
By writing again $A$ for $\sqrt{\tilde{\mathcal{M}}_{0}^{-1}}A\sqrt{\tilde{\mathcal{M}}_{0}^{-1}}$
and $\mathcal{M}^{(1)}\left(\partial_{0}^{-1}\right)$ for $\sqrt{\tilde{\mathcal{M}}_{0}^{-1}}\mathcal{M}^{(1)}\left(\partial_{0}^{-1}\right)\sqrt{\tilde{\mathcal{M}}_{0}^{-1}}$
, (\ref{eq:evo-1}) is recovered but now with a material law operator
of the form 
\[
\mathcal{M}\left(\partial_{0}^{-1}\right)=P_{\mathcal{M}_{0}\left[H\right]}+\partial_{0}^{-1}\mathcal{M}^{\left(1\right)}\left(\partial_{0}^{-1}\right).
\]
Since we have
\[
H=\overline{A\left[H\right]}\oplus\left[\left\{ 0\right\} \right]A
\]
we obtain the decomposition
\begin{align*}
\left(\partial_{0}\left(\pi_{\overline{A\left[H\right]}}\mathcal{M}\left(\partial_{0}^{-1}\right)\pi_{\overline{A\left[H\right]}}^{*}\right)+\pi_{\overline{A\left[H\right]}}A\pi_{\overline{A\left[H\right]}}^{*}\right)\pi_{\overline{A\left[H\right]}}U+\\
+\partial_{0}\left(\pi_{\overline{A\left[H\right]}}\mathcal{M}\left(\partial_{0}^{-1}\right)\pi_{\left[\left\{ 0\right\} \right]A}^{*}\right)\pi_{\left[\left\{ 0\right\} \right]A}U=\pi_{\overline{A\left[H\right]}}F
\end{align*}
and
\[
\left(\partial_{0}\left(\pi_{\left[\left\{ 0\right\} \right]A}\mathcal{M}\left(\partial_{0}^{-1}\right)\pi_{\left[\left\{ 0\right\} \right]A}^{*}\right)\right)\pi_{\left[\left\{ 0\right\} \right]A}U+\left(\partial_{0}\left(\pi_{\left[\left\{ 0\right\} \right]A}\mathcal{M}\left(\partial_{0}^{-1}\right)\pi_{\overline{A\left[H\right]}}^{*}\right)\right)\pi_{\overline{A\left[H\right]}}U=\pi_{\left[\left\{ 0\right\} \right]A}F.
\]
Note that 
\[
\pi_{\overline{A\left[H\right]}}A\pi_{\overline{A\left[H\right]}}^{*}
\]
is now skew-selfadjoint in $\overline{A\left[H\right]}$ and strict
positive definiteness of the real parts of the operators $\partial_{0}\left(\pi_{\overline{A\left[H\right]}}\mathcal{M}\left(\partial_{0}^{-1}\right)\pi_{\overline{A\left[H\right]}}^{*}\right)$
and $\partial_{0}\left(\pi_{\left[\left\{ 0\right\} \right]A}\mathcal{M}\left(\partial_{0}^{-1}\right)\pi_{\left[\left\{ 0\right\} \right]A}^{*}\right)$
is preserved. It should also be clear that here it makes no sense
to assume $\pi_{\left[\left\{ 0\right\} \right]A}u=0$ since the system
would be over-determined since in general we cannot assume that $P_{\left[\left\{ 0\right\} \right]A}$
commutes with $\mathcal{M}\left(\partial_{0}^{-1}\right)$. Instead,
solving the latter equation for $\pi_{\left[\left\{ 0\right\} \right]A}U$
yields
\begin{align}
\pi_{\left[\left\{ 0\right\} \right]A}U & =\left(\partial_{0}\pi_{\left[\left\{ 0\right\} \right]A}\mathcal{M}\left(\partial_{0}^{-1}\right)\pi_{\left[\left\{ 0\right\} \right]A}^{*}\right)^{-1}\pi_{\left[\left\{ 0\right\} \right]A}F+\label{eq:0A}\\
 & -\left(\partial_{0}\pi_{\left[\left\{ 0\right\} \right]A}\mathcal{M}\left(\partial_{0}^{-1}\right)\pi_{\left[\left\{ 0\right\} \right]A}^{*}\right)^{-1}\left(\partial_{0}\pi_{\left[\left\{ 0\right\} \right]A}\mathcal{M}\left(\partial_{0}^{-1}\right)\pi_{\overline{A\left[H\right]}}^{*}\right)\pi_{\overline{A\left[H\right]}}U.\nonumber 
\end{align}
Inserting this into the first equation yields
\begin{align}
 & \left(\partial_{0}\left(\pi_{\overline{A\left[H\right]}}\mathcal{M}\left(\partial_{0}^{-1}\right)\pi_{\overline{A\left[H\right]}}^{*}\right)+\pi_{\overline{A\left[H\right]}}A\pi_{\overline{A\left[H\right]}}^{*}\right)\pi_{\overline{A\left[H\right]}}U+\nonumber \\
 & -\partial_{0}\left(\pi_{\overline{A\left[H\right]}}\mathcal{M}\left(\partial_{0}^{-1}\right)\pi_{\left[\left\{ 0\right\} \right]A}^{*}\right)\left(\partial_{0}\pi_{\left[\left\{ 0\right\} \right]A}\mathcal{M}\left(\partial_{0}^{-1}\right)\pi_{\left[\left\{ 0\right\} \right]A}^{*}\right)^{-1}\left(\partial_{0}\pi_{\left[\left\{ 0\right\} \right]A}\mathcal{M}\left(\partial_{0}^{-1}\right)\pi_{\overline{A\left[H\right]}}^{*}\right)\pi_{\overline{A\left[H\right]}}U\nonumber \\
 & =\pi_{\overline{A\left[H\right]}}F-\partial_{0}\left(\pi_{\overline{A\left[H\right]}}\mathcal{M}\left(\partial_{0}^{-1}\right)\pi_{\left[\left\{ 0\right\} \right]A}^{*}\right)\left(\left(\partial_{0}\pi_{\left[\left\{ 0\right\} \right]A}\mathcal{M}\left(\partial_{0}^{-1}\right)\pi_{\left[\left\{ 0\right\} \right]A}^{*}\right)^{-1}\pi_{\left[\left\{ 0\right\} \right]A}F\right)\nonumber \\
 & =\left(\partial_{0}\tilde{\mathcal{M}}\left(\partial_{0}^{-1}\right)+\pi_{\overline{A\left[H\right]}}A\pi_{\overline{A\left[H\right]}}^{*}\right)\,\pi_{\overline{A\left[H\right]}}U\label{eq:AH}
\end{align}
Now, if {\footnotesize{
\begin{equation}
\begin{array}{rl}
\tilde{\mathcal{M}}\left(\partial_{0}^{-1}\right) & \coloneqq\pi_{\overline{A\left[H\right]}}\mathcal{M}\left(\partial_{0}^{-1}\right)\pi_{\overline{A\left[H\right]}}^{*}+\\
 & -\left(\pi_{\overline{A\left[H\right]}}\mathcal{M}\left(\partial_{0}^{-1}\right)\pi_{\left[\left\{ 0\right\} \right]A}^{*}\right)\left(\partial_{0}\pi_{\left[\left\{ 0\right\} \right]A}\mathcal{M}\left(\partial_{0}^{-1}\right)\pi_{\left[\left\{ 0\right\} \right]A}^{*}\right)^{-1}\left(\partial_{0}\pi_{\left[\left\{ 0\right\} \right]A}\mathcal{M}\left(\partial_{0}^{-1}\right)\pi_{\overline{A\left[H\right]}}^{*}\right)
\end{array}\label{eq:pos-def-tilde-1}
\end{equation}
}}satisfies the required strictly positive definiteness for material
laws in the usual sense in the subspace $\overline{A\left[H\right]}$,
we have solvability in $\overline{A\left[H\right]}$. This observation
for general $A$ may be used to restrict the problem to $\overline{A\left[H\right]}$,
which may allow for example to utilize the compactness of the restricted
resolvent of $A$ in $\overline{A\left[H\right]}$ (if this indeed
holds) whereas the original $A$ may have too large a kernel to have
a compact resolvent, see e.g. \cite{Waurick2012} for an application. 

Assuming (\ref{eq:shape}), (\ref{eq:small}), it suffices to inspect
(\ref{eq:pos-def-tilde-1}) for $\mathcal{M}\left(\partial_{0}^{-1}\right)$
replaced by $\mathcal{M}_{0}+\partial_{0}^{-1}\mathcal{M}_{1}$ due
to the smallness assumption (\ref{eq:small}). Indeed, for the regular
case where $\mathcal{M}_{0}$ is strictly positive, according to the
above, it suffices to consider $\mathcal{M}_{0}=1$ for which 
\begin{align*}
 & \Re\left\langle U|\partial_{0}\left(\pi_{\overline{A\left[H\right]}}\pi_{\overline{A\left[H\right]}}^{*}-\pi_{\overline{A\left[H\right]}}\pi_{\left[\left\{ 0\right\} \right]A}^{*}\left(\pi_{\left[\left\{ 0\right\} \right]A}\pi_{\left[\left\{ 0\right\} \right]A}^{*}\right)^{-1}\pi_{\left[\left\{ 0\right\} \right]A}\pi_{\overline{A\left[H\right]}}^{*}\right)U\right\rangle _{\nu,0,0}=\\
 & =\Re\left\langle U|\partial_{0}\pi_{\overline{A\left[H\right]}}\pi_{\overline{A\left[H\right]}}^{*}U\right\rangle _{\nu,0,0}\\
 & =\nu\Re\left\langle \pi_{\overline{A\left[H\right]}}^{*}U|\pi_{\overline{A\left[H\right]}}^{*}U\right\rangle _{\nu,0,0}=\nu\Re\left\langle U|U\right\rangle _{\nu,0,0}
\end{align*}
for $U\in H_{\nu,1}\left(\mathbb{R},\overline{A\left[H\right]}\right)$,
see also \cite[Theorem 6.11]{Waurick2012b} for the case of $\mathcal{M}_{0}$
having non-trivial nullspace.

Substituting the solution of this standard evolutionary problem into
(\ref{eq:0A}) we obtain the other part of $U$.

\section{Some Applications}

\subsection{A Particular Mother Operator}

Now we consider specifically
\begin{equation}
\left(\partial_{0}\mathcal{M}\left(\partial_{0}^{-1}\right)+A\right)U=F\label{eq:mother-evo}
\end{equation}
where
\begin{equation}
A=\left(\begin{array}{cc}
0 & -\nabla^{*}\\
\nabla & 0
\end{array}\right)\label{eq:mother}
\end{equation}
with a suitable domain making $A$ skew-selfadjoint in the Hilbert
space%
\footnote{We have chosen here to add up tensor spaces of all ranks, although
in applications only $k=0,1,2,3$ appear to be relevant. Restricting
to the ``physically relevant'' subspace 
\[
V=L_{0}^{2}\left(\Omega\right)\oplus L_{1}^{2}\left(\Omega\right)\oplus L_{2}^{2}\left(\Omega\right)\oplus L_{3}^{2}\left(\Omega\right)
\]
may therefore be considered as a first application of the mechanism
to generate descendants of the operator in (\ref{eq:mother-evo},\ref{eq:mother}).%
} 
\[
H=\left(\bigoplus_{k\in\mathbb{N}}L_{k}^{2}\left(\Omega\right)\right)\oplus\left(\bigoplus_{k\in\mathbb{N}}L_{k}^{2}\left(\Omega\right)\right).
\]
Here $\nabla$ and $\nabla^{*}$ are formal adjoints on the linear
subspace 
\[
\left(\bigoplus_{k\in\mathbb{N}}\interior{C}_{1,k}\left(\Omega\right)\right)\oplus\left(\bigoplus_{k\in\mathbb{N}}\interior{C}_{1,k}\left(\Omega\right)\right)
\]
of $H$, where $\interior{C}_{1,k}\left(\Omega\right)$ denotes the
space of $C_{1}$-smooth co-variant tensor fields of rank $k$ with
compact support on a Riemannian $C_{1,1}$-manifold $M$ with metric
tensor $g$. The differential operator $\nabla$ is the so-called
co-variant derivative and its skew-adjoint $-\nabla^{*}$ is frequently
introduced as the tensorial divergence%
\footnote{Correspondingly, we could use $\grad$ as a notation for the covariant
derivative $\nabla$ to give the original evolutionary equation the
suggestive look of the acoustic system (see below).%
} $\dive$. For sake of definiteness we shall only consider the choice%
\footnote{This choice may be referred to as the Dirichlet boundary condition
case. %
}
\[
A\coloneqq\left(\begin{array}{cc}
0 & -\left(\interior{\nabla}\right)^{*}\\
\interior{\nabla} & 0
\end{array}\right)
\]

where $\interior{\nabla}$ denotes the closure of $\nabla$ applied
to elements of $\bigoplus_{k\in\mathbb{N}}\interior{C}_{1,k}\left(\Omega\right)$
as an operator in $\bigoplus_{k\in\mathbb{N}}L_{k}^{2}\left(\Omega\right).$
For the material law we impose the usual constraint (\ref{eq:posdef-2}). 

It may be surprising that the majority of initial boundary value problems
from classical mathematical physics can be produced precisely from
(\ref{eq:mother-evo},\ref{eq:mother}) by choosing suitable projections
for constructing descendants. This is the main application of the
above considerations. In order to make matters more easily digestible
we constrain the illustration of our observations to the simple flat
case, i.e. $M$ is $\mathbb{R}^{\ensuremath{n-k}}\times\mathbb{T}^{k}$,
$n=1,2,3,$ $k=0,1,2$,3, $k\leq n$, where $\mathbb{T}$ is the flat
Torus. 

It will turn out that the physical interpretation has remarkably little
relevance for our structural observation. In fact, problems can be
very different in physical interpretation, sharing the same formal
structure makes the solution theory coincide.

\subsection{Isolated Physical Phenomena}

\subsubsection{\label{sub:Acoustic-Equation--Heat}Acoustic Equation, Heat Conduction
and the Relativistic Schr\"odinger Equations}

If we choose
\[
V=L_{0}^{2}\left(\Omega\right)\oplus L_{1}^{2}\left(\Omega\right),\:\Omega\subseteq\mathbb{R}^{3},
\]
for our construction of descendants via projectors then we obtain
the classical system governing acoustic waves or, merely depending
on the choice of material law, the heat equation. In the Cartesian
case we have by identifying $0-$tensors with functions and $1-$tensors
with vector fields the classical first order system
\begin{equation}
\left(\partial_{0}\mathcal{M}\left(\partial_{0}^{-1}\right)+\left(\begin{array}{cc}
0 & \dive\\
\interior{\grad} & 0
\end{array}\right)\right)\left(\begin{array}{c}
p\\
v
\end{array}\right)=\left(\begin{array}{c}
f\\
g
\end{array}\right)\label{eq:acoustics-heat}
\end{equation}
with, for example,
\[
\mathcal{M}\left(\partial_{0}^{-1}\right)=\left(\begin{array}{cc}
\rho & 0\\
0 & \kappa
\end{array}\right)+\partial_{0}^{-1}\left(\begin{array}{cc}
0 & 0\\
0 & \sigma
\end{array}\right)
\]

as a simple material law operator. Note that $\overline{P_{L_{0}^{2}\left(\Omega\right)}\left(\interior{\nabla}\right)^{*}}\pi_{L_{1}^{2}\left(\Omega\right)}^{*}=P_{L_{0}^{2}\left(\Omega\right)}\left(\interior{\nabla}\right)^{*}\pi_{L_{1}^{2}\left(\Omega\right)}^{*}=\left(\interior{\nabla}\right)^{*}\pi_{L_{1}^{2}\left(\Omega\right)}^{*}$and
$\overline{P_{L_{0}^{1}\left(\Omega\right)}\interior{\nabla}}\pi_{L_{0}^{2}\left(\Omega\right)}^{*}=P_{L_{0}^{1}\left(\Omega\right)}\interior{\nabla}\pi_{L_{0}^{2}\left(\Omega\right)}^{*}=\interior{\nabla}\pi_{L_{0}^{2}\left(\Omega\right)}^{*}$
are already closed and $\pi_{L_{0}^{2}\left(\Omega\right)}^{*}$,
$\pi_{L_{1}^{2}\left(\Omega\right)}^{*}$ are isometric embeddings
so that
\begin{align*}
\left(\begin{array}{cc}
0 & \dive\\
\interior{\grad} & 0
\end{array}\right)\coloneqq\pi_{V}A\pi_{V}^{*} & =\left(\begin{array}{cc}
0 & -\pi_{L_{0}^{2}\left(\Omega\right)}\left(\interior{\nabla}\right)^{*}\pi_{L_{1}^{2}\left(\Omega\right)}^{*}\\
\pi_{L_{1}^{2}\left(\Omega\right)}\interior{\nabla}\pi_{L_{0}^{2}\left(\Omega\right)}^{*} & 0
\end{array}\right)\\
 & =\left(\begin{array}{cc}
0 & -\overline{\pi_{L_{0}^{2}\left(\Omega\right)}\left(\interior{\nabla}\right)^{*}}\pi_{L_{1}^{2}\left(\Omega\right)}^{*}\\
\overline{\pi_{L_{1}^{2}\left(\Omega\right)}\interior{\nabla}\pi_{L_{0}^{2}\left(\Omega\right)}^{*}} & 0
\end{array}\right)=\overline{\pi_{V}A}\pi_{V}^{*}
\end{align*}
is skew-selfadjoint. If $\rho,\kappa$ are strictly positive definite,
continuous, selfadjoint operators then the material law can be taken
to describe acoustic wave propagation. If $\sigma$ is also strictly
positive definite, continuous and selfadjoint operators, we interpret
$\sigma$ as a damping term. Alternatively this could then be considered
as describing heat propagation with Cattaneo modification. Keeping
all these constraints except for assuming $\kappa=0$, we get the
classical heat propagation. The second row describes in this interpretation
(for $g=0$) the so-called Fourier law of heat conduction. In both
cases, the materials are indeed such that the material law commutes
with complex conjugation. This allows to interpret the equation in
real-valued terms. 

Alternatively, we may view $L_{k}^{2}\left(\Omega\right)=L_{k}^{2}\left(\Omega,\mathbb{C}\right)$,
$k\in\mathbb{N},$ as a Hilbert space over the field $\mathbb{R}$
(by restricting the underlying scalar field). Then we have
\begin{align*}
\mathcal{R}:L_{k}^{2}\left(\Omega,\mathbb{C}\right) & \to L_{k}^{2}\left(\Omega,\mathbb{R}\right)\oplus L_{k}^{2}\left(\Omega,\mathbb{R}\right)\\
u & \mapsto\left(\begin{array}{c}
\Re u\\
\Im u
\end{array}\right)
\end{align*}
as an $\mathbb{R}$-unitary mapping. For example multiplication by
the complex unit is then unitarily equivalent to
\[
\mathcal{R}\i\mathcal{R}^{-1}=\left(\begin{array}{cc}
0 & -1\\
1 & 0
\end{array}\right).
\]

With this observation we get that the Schr\"odinger operator $\partial_{0}+\i\,\Delta_{D}$
assumes the real form 
\begin{equation}
\partial_{0}+\left(\begin{array}{cc}
0 & -\Delta_{D}\\
\Delta_{D} & 0
\end{array}\right),\label{eq:real-schroe}
\end{equation}
where, to be specific about boundary conditions, we have chosen the
Dirichlet-Laplacian $\Delta_{D}.$ Clearly, due to its second order
type this operator is not covered in our approach. There is, however,
a variant known as the relativistic Schr\"odinger operator in which
$-\Delta_{D}$ is simply replaced by $\sqrt{-\Delta_{D}}=\left|\interior{\grad}\right|,$
which turns out to be essentially unitarily equivalent to the acoustics
problem (\ref{eq:acoustics-heat}) for an even simpler material law.

Indeed, removing the null space of $A=\left(\begin{array}{cc}
0 & \dive\\
\interior{\grad} & 0
\end{array}\right)$ by reducing further to the subspace $L^{2}\left(\Omega\right)\oplus\overline{\interior{\grad}\left[L^{2}\left(\Omega\right)\right]},$
which is the range of $A,$ we obtain another descendant of (\ref{eq:mother-evo},\ref{eq:mother}),
which is indeed a relative of the relativistic Schr\"odinger operator.
According to the polar decomposition theorem there is a unitary mapping
$U$ such that
\[
\interior{\grad}=U\left|\interior{\grad}\right|
\]
 and 
\[
-\dive=\left|\interior{\grad}\right|U^{*}.
\]
Consequently,
\begin{align*}
 & \left(\partial_{0}\left(\left(\begin{array}{cc}
1 & 0\\
0 & U
\end{array}\right)\mathcal{M}\left(\partial_{0}^{-1}\right)\left(\begin{array}{cc}
1 & 0\\
0 & U^{*}
\end{array}\right)\right)+\left(\begin{array}{cc}
0 & \dive\\
\interior{\grad} & 0
\end{array}\right)\right)=\\
 & =\left(\begin{array}{cc}
1 & 0\\
0 & U
\end{array}\right)\left(\partial_{0}\mathcal{M}\left(\partial_{0}^{-1}\right)+\left(\begin{array}{cc}
0 & -\left|\interior{\grad}\right|\\
\left|\interior{\grad}\right| & 0
\end{array}\right)\right)\left(\begin{array}{cc}
1 & 0\\
0 & U^{*}
\end{array}\right).
\end{align*}
Since for the relativistic Schr\"odinger operator $\mathcal{M}\left(\partial_{0}^{-1}\right)=1,$
we get
\[
\left(\partial_{0}+\left(\begin{array}{cc}
0 & \dive\\
\interior{\grad} & 0
\end{array}\right)\right)=\left(\begin{array}{cc}
1 & 0\\
0 & U
\end{array}\right)\left(\partial_{0}+\left(\begin{array}{cc}
0 & -\left|\interior{\grad}\right|\\
\left|\interior{\grad}\right| & 0
\end{array}\right)\right)\left(\begin{array}{cc}
1 & 0\\
0 & U^{*}
\end{array}\right).
\]

There is another rather common version to translate the operator of
the wave equation into a first order in time system, which, however,
is nothing but another relative of the acoustics operator. Utilizing
the naive analogy to the ordinary differential equations case one
translates
\[
\partial_{0}^{2}u-\Delta_{D}u=f
\]
into the system
\begin{align*}
\left(\partial_{0}+\left(\begin{array}{cc}
0 & \epsilon\\
0 & 0
\end{array}\right)+\left(\begin{array}{cc}
0 & \Delta_{D}-\epsilon\\
1 & 0
\end{array}\right)\right)\left(\begin{array}{c}
\partial_{0}u\\
-u
\end{array}\right) & =\left(\partial_{0}+\left(\begin{array}{cc}
0 & \Delta_{D}\\
1 & 0
\end{array}\right)\right)\left(\begin{array}{c}
\partial_{0}u\\
-u
\end{array}\right)\\
 & =\left(\begin{array}{c}
f\\
0
\end{array}\right)
\end{align*}

Here we choose $-\epsilon\in\mathbb{R}_{\leq0}$ in the resolvent
set of $-\Delta_{D}.$ The reasoning goes like this:
\[
\left(\begin{array}{cc}
0 & \Delta_{D}-\epsilon\\
1 & 0
\end{array}\right)
\]
is skew-selfadjoint considered in $L^{2}\left(\Omega\right)\oplus H_{1}\left(\sqrt{-\Delta_{D}+\epsilon}\right)$
where $H_{1}\left(\sqrt{-\Delta_{D}+\epsilon}\right)$ is $D\left(\sqrt{-\Delta_{D}}\right)$
equipped with the inner product
\[
\left(u,v\right)\mapsto\left\langle \sqrt{-\Delta_{D}}u|\sqrt{-\Delta_{D}}v\right\rangle _{0}+\epsilon\left\langle u|v\right\rangle _{0},
\]
i.e. for $\epsilon=1$ the graph inner product of $\sqrt{-\Delta_{D}}$.
We use that 
\[
\sqrt{-\Delta_{D}+\epsilon}:H_{1}\left(\sqrt{-\Delta_{D}+\epsilon}\right)\to L^{2}\left(\Omega\right)
\]
is unitary.

Now, we aim to show that the more general system 
\[
\partial_{0}\mathcal{M}\left(\partial_{0}^{-1}\right)+\left(\begin{array}{cc}
0 & \Delta_{D}\\
1 & 0
\end{array}\right)
\]
is indeed a relative to a first-order-in-time-and-space-system. We
note that
\[
\partial_{0}\mathcal{M}\left(\partial_{0}^{-1}\right)+\left(\begin{array}{cc}
0 & \Delta_{D}\\
1 & 0
\end{array}\right)=\partial_{0}\left(\mathcal{M}\left(\partial_{0}^{-1}\right)+\partial_{0}^{-1}\left(\begin{array}{cc}
0 & \epsilon\\
0 & 0
\end{array}\right)\right)+\left(\begin{array}{cc}
0 & \Delta_{D}-\epsilon\\
1 & 0
\end{array}\right)
\]
and
\begin{align*}
 & \left(\begin{array}{cc}
1 & 0\\
0 & \sqrt{-\Delta_{D}+\epsilon}
\end{array}\right)\left(\begin{array}{cc}
0 & \Delta_{D}-\epsilon\\
1 & 0
\end{array}\right)\left(\begin{array}{cc}
1 & 0\\
0 & \sqrt{-\Delta_{D}+\epsilon}^{-1}
\end{array}\right)=\\
 & =\left(\begin{array}{cc}
0 & -\sqrt{-\Delta_{D}+\epsilon}\\
\sqrt{-\Delta_{D}+\epsilon} & 0
\end{array}\right)\\
 & =\left(\begin{array}{cc}
1 & 0\\
0 & U_{-}
\end{array}\right)\left(\begin{array}{cc}
0 & -\left|\interior{\grad}\right|+\i\sqrt{\epsilon}\\
\left|\interior{\grad}\right|+\i\sqrt{\epsilon} & 0
\end{array}\right)\left(\begin{array}{cc}
1 & 0\\
0 & U_{+}
\end{array}\right),
\end{align*}
where $\sqrt{-\Delta_{D}+\epsilon}=\left|\left|\interior{\grad}\right|\pm\i\sqrt{\epsilon}\right|$
and the polar decomposition for $\left|\interior{\grad}\right|\pm\i\sqrt{\epsilon}$ 

\[
\left|\interior{\grad}\right|\pm\i\sqrt{\epsilon}=U_{\pm}\left|\left|\interior{\grad}\right|\pm\i\sqrt{\epsilon}\right|
\]
implies 
\begin{align*}
U_{\pm} & =\left(\left|\interior{\grad}\right|\pm\i\sqrt{\epsilon}\right)\left|\left|\interior{\grad}\right|\pm\i\sqrt{\epsilon}\right|^{-1}\\
 & =\left|\interior{\grad}\right|\left|\left|\interior{\grad}\right|\pm\i\sqrt{\epsilon}\right|^{-1}\pm\i\sqrt{\epsilon}\left|\left|\interior{\grad}\right|\pm\i\sqrt{\epsilon}\right|^{-1}
\end{align*}
and 
\[
U_{\pm}^{*}=U_{\mp}.
\]

Using the polar decomposition of $\interior{\grad}=U\left|\interior{\grad}\right|$
we get
\begin{align*}
 & \left(\begin{array}{cc}
1 & 0\\
0 & U
\end{array}\right)\left(\begin{array}{cc}
0 & -\left|\interior{\grad}\right|+\i\sqrt{\epsilon}\\
\left|\interior{\grad}\right|+\i\sqrt{\epsilon} & 0
\end{array}\right)\left(\begin{array}{cc}
1 & 0\\
0 & U^{*}
\end{array}\right)=\\
 & =\left(\begin{array}{cc}
0 & \dive+\i\sqrt{\epsilon}U^{*}\\
\interior{\grad}+\i\sqrt{\epsilon}U & 0
\end{array}\right)\\
 & =\left(\begin{array}{cc}
0 & \dive\\
\interior{\grad} & 0
\end{array}\right)+\i\sqrt{\epsilon}\left(\begin{array}{cc}
0 & U^{*}\\
U & 0
\end{array}\right).
\end{align*}
Thus we obtain for the transformed equation a new material law:
\begin{align*}
\tilde{\mathcal{M}}\left(\partial_{0}^{-1}\right) & =\left(\begin{array}{cc}
1 & 0\\
0 & UU_{+}\sqrt{-\Delta_{D}+\epsilon}
\end{array}\right)\mathcal{M}\left(\partial_{0}^{-1}\right)\left(\begin{array}{cc}
1 & 0\\
0 & \sqrt{-\Delta_{D}+\epsilon}^{-1}U_{-}U^{*}
\end{array}\right)+\\
 & +\partial_{0}^{-1}\left(\begin{array}{cc}
0 & \epsilon\sqrt{-\Delta_{D}+\epsilon}^{-1}U_{-}U^{*}+\i\sqrt{\epsilon}U^{*}\\
\i\sqrt{\epsilon}U & 0
\end{array}\right)
\end{align*}

Given the complexity of the arguments needed it is still a surprisingly
common mechanism used (at least in the case $\epsilon=0$ and simple
material laws) to turn partial differential equations of wave equation
type into first-order-in-time systems. None the less, in the above
terminology we encounter here mere relatives of the system of the
acoustic equations, which in turn is a descendant of (\ref{eq:mother-evo},\ref{eq:mother}).

\subsubsection{Elastic Waves}

If we choose
\[
V=L_{1}^{2}\left(\Omega\right)\oplus\mathrm{sym}\left[L_{2}^{2}\left(\Omega\right)\right]
\]
with $\Omega$ open and non-empty in $\mathbb{R}^{3}$ then we obtain
by analogous arguments the classical system governing the propagation
of waves in elastic or, depending on the choice of material law, viscoelastic
waves. Here $\mathrm{sym}$ is the mapping $\mathrm{sym}:L_{2}^{2}\left(\Omega\right)\to L_{2}^{2}\left(\Omega\right)$
induced by the symmetrization operation for co-variant tensors of
rank 2 
\[
T\mapsto\left(\left(x,y\right)\mapsto\frac{1}{2}\left(T\left(x,y\right)+T\left(y,x\right)\right)\right).
\]
Thus, in Cartesian coordinates the elasticity operator is
\[
\partial_{0}\mathcal{M}\left(\partial_{0}^{-1}\right)+\left(\begin{array}{cc}
0 & \Div\\
\interior{\Grad} & 0
\end{array}\right),
\]
where
\[
\interior{\Grad}v=\frac{1}{2}\left(\partial_{i}v_{j}+\partial_{j}v_{i}\right)_{i,j=1,2,3}
\]
and its negative adjoint
\[
\Div T=\dive T=\left(\sum_{j=1}^{3}\partial_{j}T_{ij}\right)_{i=1,2,3}
\]
 for suitable displacement velocities $v=\left(v_{j}\right)_{j=1,2,3}$
and symmetric stress tensors $\left(T_{ij}\right)_{i,j=1,2,3}.$ A
discussion of various possible material laws of interest can be found
in \cite{Pi2009-1}, compare \cite{pre05172699}.

\subsubsection{Electro-Magnetic Waves}

If we choose
\[
V=L_{1}^{2}\left(\Omega\right)\oplus\mathrm{asym}\left[L_{2}^{2}\left(\Omega\right)\right]
\]
with $\Omega$ open and non-empty in $\mathbb{R}^{3}$ then we obtain
by an analogous reasoning the classical system governing the propagation
of electro-magnetic waves. Here $\mathrm{asym}$ is the mapping $\mathrm{asym}:L_{2}^{2}\left(\Omega\right)\to L_{2}^{2}\left(\Omega\right)$
induced by the anti-symmetrization operation for co-variant tensors
of rank 2 
\[
T\mapsto\left(\left(x,y\right)\mapsto\frac{1}{2}\left(T\left(x,y\right)-T\left(y,x\right)\right)\right).
\]
To convince ourselves that this leads to Maxwell's equations we calculate
$\nabla\cdot W$ for anti-symmetric tensors $W\in\interior C_{\infty,2}\left(\Omega\right)$
in Cartesian coordinates, $\Omega$ being a non-empty open subset
of $\mathbb{R}^{3}$. Let%
\footnote{$W=\mathrm{asym}\left(T_{ks}e^{k}\otimes e^{s}\right)=\frac{1}{2}\left(T_{ks}e^{k}\otimes e^{s}-T_{ks}e^{s}\otimes e^{k}\right)=T_{ks}e^{k}\wedge e^{s}=2\,\sum_{k<s\mod3}T_{ks}e^{k}\wedge e^{s}$%
}
\[
W=\omega_{1}e^{2}\otimes e^{3}+\omega_{2}e^{3}\otimes e^{1}+\omega_{3}e^{1}\otimes e^{2}-\omega_{1}e^{3}\otimes e^{2}-\omega_{2}e^{1}\otimes e^{3}-\omega_{3}e^{2}\otimes e^{1}
\]
then
\[
T_{k,k+1}=\omega_{k+2}e^{k}\otimes e^{k+1}\quad(k\in\{1,2,3\}\text{ with addition }\mod3\text{ and }0\equiv3)
\]
\begin{align*}
\nabla\cdot W & =\partial_{k}W_{ks}e^{s}\\
 & =\partial_{1}W_{12}e^{2}+\partial_{1}W_{13}e^{3}+\partial_{2}W_{23}e^{3}+\partial_{2}W_{21}e^{1}+\partial_{3}W_{32}e^{2}+\partial_{3}W_{31}e^{1}\\
 & =\partial_{1}\omega_{3}e^{2}-\partial_{1}\omega_{2}e^{3}+\partial_{2}\omega_{1}e^{3}-\partial_{2}\omega_{3}e^{1}-\partial_{3}\omega_{1}e^{2}+\partial_{3}\omega_{2}e^{1}\\
 & =\left(\partial_{1}\omega_{3}-\partial_{3}\omega_{1}\right)e^{2}+\left(\partial_{2}\omega_{1}-\partial_{1}\omega_{2}\right)e^{3}+\left(\partial_{3}\omega_{2}-\partial_{2}\omega_{3}\right)e^{1}\\
 & =-\curl\left(\begin{array}{c}
\omega_{1}\\
\omega_{2}\\
\omega_{3}
\end{array}\right).
\end{align*}
Correspondingly,
\[
V=\eta_{1}e^{1}+\eta_{2}e^{2}+\eta_{3}e^{3}
\]
and so
\begin{align*}
\nabla V & =\partial_{1}\eta_{1}e^{1}\otimes e^{1}+\partial_{1}\eta_{2}e^{1}\otimes e^{2}+\partial_{1}\eta_{3}e^{1}\otimes e^{3}+\partial_{2}\eta_{1}e^{2}\otimes e^{1}+\\
 & +\partial_{2}\eta_{2}e^{2}\otimes e^{2}+\partial_{2}\eta_{3}e^{2}\otimes e^{3}+\partial_{3}\eta_{1}e^{3}\otimes e^{1}+\partial_{3}\eta_{2}e^{3}\otimes e^{2}+\partial_{3}\eta_{3}e^{3}\otimes e^{3}.
\end{align*}
We see that 
\begin{align*}
 & \mathrm{asym}\left(\nabla V\right)=\\
 & =\partial_{1}\eta_{2}\frac{1}{2}\left(e^{1}\otimes e^{2}-e^{2}\otimes e^{1}\right)+\partial_{1}\eta_{3}\frac{1}{2}\left(e^{1}\otimes e^{3}-e^{3}\otimes e^{1}\right)+\partial_{2}\eta_{1}\frac{1}{2}\left(e^{2}\otimes e^{1}-e^{1}\otimes e^{2}\right)+\\
 & +\partial_{2}\eta_{3}\frac{1}{2}\left(e^{2}\otimes e^{3}-e^{3}\otimes e^{2}\right)+\partial_{3}\eta_{1}\frac{1}{2}\left(e^{3}\otimes e^{1}-e^{1}\otimes e^{3}\right)+\partial_{3}\eta_{2}\frac{1}{2}\left(e^{3}\otimes e^{2}-e^{2}\otimes e^{3}\right)\\
 & =\left(\partial_{1}\eta_{2}-\partial_{2}\eta_{1}\right)\frac{1}{2}\left(e^{1}\otimes e^{2}-e^{2}\otimes e^{1}\right)+\left(\partial_{2}\eta_{3}-\partial_{3}\eta_{2}\right)\frac{1}{2}\left(e^{2}\otimes e^{3}-e^{3}\otimes e^{2}\right)+\\
 & +\left(\partial_{3}\eta_{1}-\partial_{1}\eta_{3}\right)\frac{1}{2}\left(e^{3}\otimes e^{1}-e^{1}\otimes e^{3}\right)\\
 & =\left(\partial_{1}\eta_{2}-\partial_{2}\eta_{1}\right)\: e^{1}\wedge e^{2}+\left(\partial_{2}\eta_{3}-\partial_{3}\eta_{2}\right)\: e^{2}\wedge e^{3}+\left(\partial_{3}\eta_{1}-\partial_{1}\eta_{3}\right)\: e^{3}\wedge e^{1}\\
 & =d\wedge\eta
\end{align*}

and so 
\[
\overline{\mathrm{asym}\interior\nabla}\eqqcolon\interior d\wedge
\]
on differentiable $1$-form fields. In Cartesian coordinates Maxwell's
equations assume the familiar form
\[
\partial_{0}\mathcal{M}\left(\partial_{0}^{-1}\right)+\left(\begin{array}{cc}
0 & \curl\\
\interior{\curl} & 0
\end{array}\right),
\]
where
\[
\interior{\curl}^{*}=\curl
\]
and containment of $E$ in $D\left(\interior{\curl}\right)$ encodes
and generalizes the electric boundary condition, i.e. vanishing of
the tangential components of $E$, for the electric field $E$ to
the arbitrary boundary of $\Omega$.

\subsubsection{\label{sub:Reducing-Dimensions}Reducing Dimensions}

Another instance of the reduction procedure under discussion is the
reduction of the dimension. We consider the simple case $\Omega\coloneqq\Omega_{0}\times\mathbb{T}^{s}\subseteq\mathbb{R}^{n+1}\times\mathbb{T}^{s}\eqqcolon M$
and want to describe the reduction process from $k$-tensor $L_{k}^{2}(\Omega)$
to $k$-tensor $L_{k}^{2}(\Omega_{0}).$ Throughout, we assume that
the Riemannian metric $g$ only depends on $\Omega_{0},$ that is,
we assume that $g_{ij}=0$ for $i\ne j$ and $g_{ii}=1$ for every
$i,j\in\left\{ n+1,\ldots,n+s\right\} .$ Using Cartesian coordinates,
a covariant $k$-tensor $T\in L_{k}^{2}(\Omega)$ can be written as
\begin{align*}
T & =\sum_{\alpha\in\left\{ 0,\ldots,n+s\right\} ^{k}}\omega_{\alpha}dx^{\alpha}
\end{align*}
for suitable functions $\omega_{\alpha}\in L^{2}(\Omega).$ We set
$\Pi_{n}\coloneqq\left\{ \left.\alpha\in\left\{ 0,\ldots,n+s\right\} ^{k}\right|\bigwedge_{i\in\{0,\ldots,k-1\}}\alpha_{i}\in\left\{ 0,\ldots,n\right\} \right\} $
and define 
\[
\pi_{\Omega_{0}}^{k}:L_{k}^{2}(\Omega)\to L_{k}^{2}(\Omega_{0})
\]
 by 
\begin{align*}
\left(\pi_{\Omega_{0}}^{k}T\right)(t_{0},\ldots,t_{n})\coloneqq & \sum_{\alpha\in\Pi_{n}}\intop_{\left[-\frac{1}{2},\frac{1}{2}\right]}\cdots\intop_{\left[-\frac{1}{2},\frac{1}{2}\right]}\omega_{\alpha}(t_{0},\ldots,t_{n},r_{0},\ldots,r_{s-1})\: dr_{s-1}\ldots dr_{0}\; dx^{\alpha}.
\end{align*}
The adjoint $\left(\pi_{\Omega_{0}}^{k}\right)^{\ast}$ is then the
canonical embedding of $L_{k}^{2}(\Omega_{0})$ into $L_{k}^{2}(\Omega)$
given by 
\[
\left(\pi_{\Omega_{0}}^{k}\right)^{\ast}\left(\sum_{\beta\in\{0,\ldots,n\}^{k}}\psi_{\beta}dx^{\beta}\right)=\sum_{\alpha\in\left\{ 0,\ldots,n+s\right\} ^{k}}\tilde{\psi}_{\alpha}dx^{\alpha},
\]
where 
\[
\tilde{\psi}_{\alpha}(t_{0},\ldots,t_{n},r_{0},\ldots,r_{s-1})\coloneqq\begin{cases}
\psi_{\alpha}(t_{0},\ldots,t_{n}) & \mbox{ if }\alpha\in\Pi_{n},\\
0 & \mbox{ otherwise}.
\end{cases}
\]
Indeed, for $T=\sum_{\alpha\in\left\{ 0,\ldots,n+s\right\} ^{k}}\omega_{\alpha}dx^{\alpha}\in L_{k}^{2}(\Omega)$
and $S=\sum_{\beta\in\{0,\ldots,n\}^{k}}\psi_{\beta}dx^{\beta}\in L_{k}^{2}(\Omega_{0})$
we compute 
\begin{align*}
\langle\pi_{\Omega_{0}}^{k}T|S\rangle_{L_{k}^{2}(\Omega_{0})} & =\sum_{\alpha\in\Pi_{n}}\sum_{\beta\in\{0,\ldots,n\}^{k}}\intop_{\Omega_{0}}\intop_{\left[-\frac{1}{2},\frac{1}{2}\right]}\cdots\intop_{\left[-\frac{1}{2},\frac{1}{2}\right]}\omega_{\alpha}(\cdot,r_{0},\ldots,r_{s-1})\: dr_{s-1}\ldots dr_{0}\psi_{\beta}g^{\alpha\beta}\, dV_{\mathbb{R}^{n+1}}\\
 & =\sum_{\alpha\in\Pi_{n}}\sum_{\beta\in\Pi_{n}}\intop_{\Omega}\omega_{\alpha}\tilde{\psi}_{\beta}g^{\alpha\beta}\, dV_{M}\\
 & =\left\langle T\left|\left(\pi_{\Omega_{0}}^{k}\right)^{\ast}S\right.\right\rangle _{L_{k}^{2}(\Omega)},
\end{align*}
where in the last step we have used $\tilde{\psi}_{\beta}=0$ for
$\beta\notin\Pi_{n}$ and $g^{\alpha\beta}=0$ for $\beta\in\Pi_{n},\alpha\notin\Pi_{n}$.
Moreover, the last computation shows that the embedding $\left(\pi_{\Omega_{0}}^{k}\right)^{\ast}$
is isometric, since 
\[
\pi_{\Omega_{0}}^{k}\left(\pi_{\Omega_{0}}^{k}\right)^{\ast}S=S,
\]
where we have used $\intop_{\mathbb{T}^{s}}\: dV_{\mathbb{T}^{s}}=1$.
The application of the abstract descendant mechanism with $A$ given
by (\ref{eq:mother}) and $B_{0}=B_{1}\coloneqq\bigoplus_{k\in\mathbb{N}}\pi_{\Omega_{0}}^{k}$
provides a way to reduce the dimension of the underlying domain for
an evolutionary problem.  

Applying this reduction process in the particular case $\Omega=\Omega_{0}\times\mathbb{T}^{n-1}\subseteq\mathbb{R}\times\mathbb{T}^{n-1}=M$,
gives a $\left(1+1\right)$-di\-men\-sio\-nal evolutionary descendant
of (\ref{eq:mother-evo},\ref{eq:mother}) on the open subset $\Omega$
of the flat tube manifold $M.$ We may write this descendant in Cartesian
coordinates simply as 

\begin{equation}
\partial_{0}\mathcal{M}\left(\partial_{0}^{-1}\right)+\left(\begin{array}{cc}
0 & \partial_{1}\\
\interior{\partial}_{1} & 0
\end{array}\right),\label{eq:Sturm-Liouville}
\end{equation}
where now $\left(\begin{array}{cc}
0 & \partial_{1}\\
\interior{\partial}_{1} & 0
\end{array}\right)$ is skew-selfadjoint on the space $L^{2}\left(\Omega_{0}\right)\oplus L^{2}\left(\Omega_{0}\right).$ 

If $\Omega_{0}=\mathbb{R}$ we may go one step further, we can decompose
$L^{2}\left(\mathbb{R}\right)$ into orthogonal subspaces 
\[
L^{2}\left(\mathbb{R}\right)=L^{2,\mbox{even}}\left(\mathbb{R}\right)\oplus L^{2,\mbox{odd}}\left(\mathbb{R}\right),
\]
with 
\begin{align*}
L^{2,\mbox{even}}\left(\mathbb{R}\right) & =\left\{ f\in L^{2}\left(\mathbb{R}\right)|f\left(x\right)=f\left(-x\right)\mbox{ for a.e. }x\in\mathbb{R}\right\} ,\\
L^{2,\mbox{odd}}\left(\mathbb{R}\right) & =\left\{ f\in L^{2}\left(\mathbb{R}\right)|f\left(x\right)=-f\left(-x\right)\mbox{ for a.e. }x\in\mathbb{R}\right\} .
\end{align*}
Since $\pi_{L^{2,\mbox{even}}\left(\mathbb{R}\right)}$ and $\pi_{L^{2,\mbox{odd}}\left(\mathbb{R}\right)}$
are compatible with $\partial_{1}=\interior{\partial}_{1}$ we obtain
that 
\[
\partial_{0}\tilde{\mathcal{M}}\left(\partial_{0}^{-1}\right)+\left(\begin{array}{cc}
0 & \partial_{1}\\
\partial_{1} & 0
\end{array}\right)
\]
is the $\left(\pi_{L^{2,\mbox{even}}\left(\mathbb{R}\right)},\pi_{L^{2,\mbox{odd}}\left(\mathbb{R}\right)}\right)$-descendant
of (\ref{eq:Sturm-Liouville}) on $L^{2,\mbox{even}}\left(\mathbb{R}\right)\oplus L^{2,\mbox{odd}}\left(\mathbb{R}\right)$
with 
\[
\tilde{\mathcal{M}}\left(\partial_{0}^{-1}\right)\coloneqq\left(\begin{array}{cc}
\pi_{L^{2,\mbox{even}}\left(\mathbb{R}\right)} & 0\\
0 & \pi_{L^{2,\mbox{odd}}\left(\mathbb{R}\right)}
\end{array}\right)\mathcal{M}\left(\partial_{0}^{-1}\right)\left(\begin{array}{cc}
\pi_{L^{2,\mbox{even}}\left(\mathbb{R}\right)}^{*} & 0\\
0 & \pi_{L^{2,\mbox{odd}}\left(\mathbb{R}\right)}^{*}
\end{array}\right)
\]
as a new material law operator. If $\mathcal{M}\left(\partial_{0}^{-1}\right)$
is block diagonal
\[
\mathcal{M}\left(\partial_{0}^{-1}\right)=\left(\begin{array}{cc}
\mathcal{M}_{00}\left(\partial_{0}^{-1}\right) & 0\\
0 & \mathcal{M}_{11}\left(\partial_{0}^{-1}\right)
\end{array}\right)
\]
then the two rows can be combined into one
\[
\partial_{0}\left(\pi_{L^{2,\mbox{even}}\left(\mathbb{R}\right)}\mathcal{M}_{00}\left(\partial_{0}^{-1}\right)\pi_{L^{2,\mbox{even}}\left(\mathbb{R}\right)}^{*}+\pi_{L^{2,\mbox{odd}}\left(\mathbb{R}\right)}\mathcal{M}_{11}\left(\partial_{0}^{-1}\right)\pi_{L^{2,\mbox{odd}}\left(\mathbb{R}\right)}^{*}\right)+\partial_{1}
\]
on $L^{2}\left(\mathbb{R}\right)=L^{2,\mbox{even}}\left(\mathbb{R}\right)\oplus L^{2,\mbox{odd}}\left(\mathbb{R}\right).$
This is the so-called transport equation in the $\left(1+1\right)-$di\-men\-sio\-nal
case, which thus also is shown to be a descendant of (\ref{eq:mother-evo},\ref{eq:mother})
(for $\Omega=M=\mathbb{R}\times\mathbb{T}^{n-1}$). 
\begin{rem}
(The Transport Equation in $\mathbb{R}^{n}$) Returning to $\mathbb{R}^{n}$
and assuming that by a suitable choice of coordinates the transport
operator $\partial_{0}+a\cdot\partial$ assumes the unitarily equivalent
form 
\[
\partial_{0}+\partial_{1}
\]
with $\partial_{1}$ on a cylinder%
\footnote{Similarly we may consider transport on a period slab, i.e. $\Omega=\mathbb{T}\times\Omega_{0}\subseteq\mathbb{T}\times\mathbb{R}$
as a flat Riemannian manifold.%
} $\Omega\coloneqq\mathbb{R}\times\Omega_{0}\subseteq\mathbb{R}\times\mathbb{R}^{n-1}$.
Here also $\partial_{1}=\interior{\partial}_{1}$. Of course, we could
have more complicated material laws:
\begin{equation}
\partial_{0}\mathcal{M}\left(\partial_{0}^{-1}\right)+\partial_{1}.\label{eq:transport}
\end{equation}
The cross-section $\Omega_{0}$ of the cylinder serves here merely
as a parameter range, since no differentiations in these directions
are involved. Allowing for additional parameter dependence in (\ref{eq:mother-evo},\ref{eq:mother})
would make (\ref{eq:transport}) a descendant of (\ref{eq:mother-evo},\ref{eq:mother}).
\end{rem}

\subsection{Interacting Descendants}

The various descendants of (\ref{eq:mother-evo},\ref{eq:mother})
can interact in many ways to create new models of more complex phenomena.
We shall first discuss a particular interaction based on an alternating
differential forms framework (alternating covariant tensors). Then
we shall turn to the discussion of coupled descendants, where the
coupling only occurs via the material law operator.

\subsubsection{The Extended Maxwell System and the Dirac Equation}

\paragraph{The Extended Maxwell Operator}

Assuming a relatively simple material law of the form
\[
\mathcal{M}\left(\partial_{0}^{-1}\right)=\mathcal{M}_{0}
\]
with $\mathcal{M}_{0}$ continuous, selfadjoint and strictly positive
definite, Maxwell's equations can be reformulated as
\[
\partial_{0}+\sqrt{\mathcal{M}_{0}^{-1}}\left(\begin{array}{cc}
0 & -\left(\interior{d}_{1}\wedge\right)^{*}\\
\interior{d}_{1}\wedge & 0
\end{array}\right)\sqrt{\mathcal{M}_{0}^{-1}}
\]
Here $\interior{d}_{1}\wedge$ is the exterior derivative applied
to covariant 1-tensors (with Dirichlet type boundary condition). By
including alternating tensor fields of all odd orders in the first
block component and of all even orders in the second block component
we arrive at
\[
\partial_{0}+\sqrt{\mathcal{M}_{0}^{-1}}\left(\begin{array}{cc}
0 & -\left(\interior{d}_{1,3}\wedge\right)^{*}\\
\interior{d}_{1,3}\wedge & 0
\end{array}\right)\sqrt{\mathcal{M}_{0}^{-1}}.
\]
Here $\interior{d}_{1,3}\wedge$ is the exterior derivative applied
to the direct sum of alternating tensors of order 1 and 3 (with Dirichlet
type boundary condition). Note that $d\wedge\omega=0$ on 3-forms
in $\mathbb{R}^{3}.$ The material law operator $\mathcal{M}_{0}$
is here of course assumed to be continuous, selfadjoint and strictly
positive definite on the larger space. Adding 
\[
\sqrt{\mathcal{M}_{0}}\left(\begin{array}{cc}
0 & -\interior{d}_{0,2}\wedge\\
\left(\interior{d}_{0,2}\wedge\right)^{*} & 0
\end{array}\right)\sqrt{\mathcal{M}_{0}}
\]
as another descendant%
\footnote{Note that 
\[
\left(\begin{array}{cc}
0 & -\interior{d}_{0,2}\wedge\\
\left(\interior{d}_{0,2}\wedge\right)^{*} & 0
\end{array}\right)=\left(\begin{array}{cc}
0 & -1\\
1 & 0
\end{array}\right)\left(\begin{array}{cc}
0 & -\left(\interior{d}_{0,2}\wedge\right)^{*}\\
\interior{d}_{0,2}\wedge & 0
\end{array}\right)\left(\begin{array}{cc}
0 & 1\\
-1 & 0
\end{array}\right).
\]
} of 
\[
\left(\begin{array}{cc}
0 & -\nabla^{*}\\
\nabla & 0
\end{array}\right),
\]
we obtain a unitarily equivalent variant of the type of operator discussed
in \cite{0579.58030} as the extended Maxwell operator 

\[
\partial_{0}+\sqrt{\mathcal{M}_{0}^{-1}}\left(\begin{array}{cc}
0 & -\left(\interior{d}_{1,3}\wedge\right)^{*}\\
\interior{d}_{1,3}\wedge & 0
\end{array}\right)\sqrt{\mathcal{M}_{0}^{-1}}+\sqrt{\mathcal{M}_{0}}\left(\begin{array}{cc}
0 & -\interior{d}_{0,2}\wedge\\
\left(\interior{d}_{0,2}\wedge\right)^{*} & 0
\end{array}\right)\sqrt{\mathcal{M}_{0}}.
\]
Here $\interior{d}_{0,2}\wedge$ is the exterior derivative applied
to the direct sum of alternating tensors of order 0 and 2 (with Dirichlet
type boundary condition). In Cartesian coordinates this is 
\begin{equation}
\partial_{0}+\sqrt{\mathcal{M}_{0}^{-1}}\left(\begin{array}{cccc}
0 & 0 & 0 & 0\\
0 & 0 & 0 & -\curl\\
0 & 0 & 0 & 0\\
0 & \interior{\curl} & 0 & 0
\end{array}\right)\sqrt{\mathcal{M}_{0}^{-1}}+\sqrt{\mathcal{M}_{0}}\left(\begin{array}{cccc}
0 & 0 & 0 & \interior{\dive}\\
0 & 0 & \interior{\grad} & 0\\
0 & \dive & 0 & 0\\
\grad & 0 & 0 & 0
\end{array}\right)\sqrt{\mathcal{M}_{0}}\label{eq:ext-Max}
\end{equation}
which is a convenient reformulation of Maxwell's equations for regularity
and numerical purposes, compare \cite{TaskinenV07}. That the spatial
part of this extended system is still skew-selfadjoint and that it
can be reduced to the original Maxwell system is due to the fact that
\[
\sqrt{\mathcal{M}_{0}^{-1}}\left(\begin{array}{cccc}
0 & 0 & 0 & 0\\
0 & 0 & 0 & -\curl\\
0 & 0 & 0 & 0\\
0 & \interior{\curl} & 0 & 0
\end{array}\right)\sqrt{\mathcal{M}_{0}^{-1}},\;\sqrt{\mathcal{M}_{0}}\left(\begin{array}{cccc}
0 & 0 & 0 & \interior{\dive}\\
0 & 0 & \interior{\grad} & 0\\
0 & \dive & 0 & 0\\
\grad & 0 & 0 & 0
\end{array}\right)\sqrt{\mathcal{M}_{0}}
\]
are commuting selfadjoint operators, which are indeed annihilating
each other. The possibility of reconstructing the original Maxwell
system assumes a particular form%
\footnote{For these special data it can be shown that components of order 0
and 3 are actually zero. If general right-hand sides are considered
then these components will be non-zero producing what is called ``scalar
waves'' contributions. %
} of the right-hand side, see \cite{0579.58030} for details. By adding
a material law term $\tilde{\mathcal{M}}\left(\partial_{0}^{-1}\right)$
we can allow for more complicated material behavior: 

\[
\partial_{0}+\tilde{\mathcal{M}}\left(\partial_{0}^{-1}\right)+\sqrt{\mathcal{M}_{0}^{-1}}\left(\begin{array}{cccc}
0 & 0 & 0 & 0\\
0 & 0 & 0 & -\curl\\
0 & 0 & 0 & 0\\
0 & \interior{\curl} & 0 & 0
\end{array}\right)\sqrt{\mathcal{M}_{0}^{-1}}+\sqrt{\mathcal{M}_{0}}\left(\begin{array}{cccc}
0 & 0 & 0 & \interior{\dive}\\
0 & 0 & \interior{\grad} & 0\\
0 & \dive & 0 & 0\\
\grad & 0 & 0 & 0
\end{array}\right)\sqrt{\mathcal{M}_{0}}.
\]

~
\begin{rem}
Projecting this further down by eliminating the third row and column
leads to a slightly smaller descendant of the extended Maxwell system
\[
\partial_{0}+\tilde{\tilde{\mathcal{M}}}\left(\partial_{0}^{-1}\right)+\sqrt{\tilde{\mathcal{M}}_{0}^{-1}}\left(\begin{array}{ccc}
0 & 0 & 0\\
0 & 0 & -\curl\\
0 & \interior{\curl} & 0
\end{array}\right)\sqrt{\tilde{\mathcal{M}}_{0}^{-1}}+\sqrt{\tilde{\mathcal{M}}_{0}}\left(\begin{array}{ccc}
0 & 0 & \interior{\dive}\\
0 & 0 & 0\\
\grad & 0 & 0
\end{array}\right)\sqrt{\tilde{\mathcal{M}}_{0}}.
\]
For ``ellipticizing'' Maxwell's equations, e.g. for numerical purposes,
this modification is perfectly sufficient, \cite{Weggler1-2012}.
\end{rem}

\paragraph{The Dirac Operator}

The Dirac operator\index{Dirac operator} $\mathcal{Q}_{0}(\partial_{0},\hat{\partial})$
is usually given as the $(4\times4)-$partial differential expression
with the block matrix form (for mass equal to 1) 
\begin{eqnarray*}
\mathcal{Q}_{0}(\partial_{0},\hat{\partial}) & := & \left(\begin{array}{cc}
\partial_{0}+\i & C(\hat{\partial})\\
C(\hat{\partial}) & \partial_{0}-\i
\end{array}\right).
\end{eqnarray*}
 Here%
\footnote{Note that 
\[
\left(\begin{array}{cc}
\partial_{3} & \partial_{1}-\i\,\partial_{2}\\
\partial_{1}+\i\,\partial_{2} & -\partial_{3}
\end{array}\right)
\]
is an operator quaternion, since it has the form
\[
\left(\begin{array}{cc}
A & -B^{*}\\
B & A^{*}
\end{array}\right),
\]
where $A:D\left(A\right)\subseteq H\to H,\, B:D\left(B\right)\subseteq H\to H$
are closed densely defined linear operators, such that $A$ has a
non-empty resolvent set $\nu\left(A\right)$ and $A,B^{*}$ are commuting,
i.e.
\[
\left(\lambda-A\right)^{-1}B\subseteq B\left(\lambda-A\right)^{-1}
\]
for $\lambda\in\nu\left(A\right)$. If $A,B$ are complex numbers
(as multipliers) this block operator matrix yields a standard representation
of the classical quaternions.%
} $C(\hat{\partial})\;:=\left(\begin{array}{cc}
\partial_{3} & \partial_{1}-\i\,\partial_{2}\\
\partial_{1}+\i\,\partial_{2} & -\partial_{3}
\end{array}\right)=\sum_{k=1}^{3}\,\Pi_{k}\,\partial_{k},$ where 
\[
\Pi_{1}:=\left(\begin{array}{cc}
0 & +1\\
+1 & 0
\end{array}\right),\:\Pi_{2}:=\left(\begin{array}{cc}
0 & -\i\\
+\i & 0
\end{array}\right),\:\Pi_{3}:=\left(\begin{array}{cc}
+1 & 0\\
0 & -1
\end{array}\right)
\]
 are known as Pauli matrices\index{Pauli matrices}. Applying the
unitary transformation given by the block matrix $\frac{1}{\sqrt{2}}\left(\begin{array}{cc}
+\i & \:+1\\
\i & \:-1
\end{array}\right)$ to $\mathcal{Q}_{0}(\partial_{0},\hat{\partial})$ we obtain
\[
\begin{array}{lcl}
\mathcal{Q}_{1}(\partial_{0},\hat{\partial}) & := & \left(\begin{array}{cc}
\partial_{0} & \i-\i\, C(\hat{\partial})\\
\i+\i\, C(\hat{\partial}) & \partial_{0}
\end{array}\right)=\\
 &  & \qquad=\frac{1}{2}\left(\begin{array}{cc}
+\i & +1\\
\i & -1
\end{array}\right)\left(\begin{array}{cc}
-\i\:\partial_{0}+1+C(\hat{\partial})\quad & -\i\:\partial_{0}+1-C(\hat{\partial})\\
-\i\: C(\hat{\partial})+\partial_{0}-\i\quad & -\i\: C(\hat{\partial})-\partial_{0}+\i
\end{array}\right)\\
 &  & \qquad=\frac{1}{2}\left(\begin{array}{cc}
+\i & +1\\
\i & -1
\end{array}\right)\left(\begin{array}{cc}
\partial_{0}+\i\quad & C(\hat{\partial})\\
C(\hat{\partial})\quad & \partial_{0}-\i
\end{array}\right)\left(\begin{array}{cc}
-\i & -\i\\
+1 & -1
\end{array}\right).
\end{array}
\]
The latter may be a preferable form since $\mathcal{Q}_{1}(\partial_{0},\hat{\partial})$
has the typical Hamiltonian form of reversibly evolutionary expressions
of mathematical physics
\[
\left(\begin{array}{cc}
\partial_{0} & -W^{*}\\
W & \partial_{0}
\end{array}\right)\:,
\]
where $W\::=\i+\i\, C(\hat{\partial})=\left(\begin{array}{cc}
\i\partial_{3}+\i & \i\partial_{1}+\partial_{2}\\
\i\partial_{1}-\partial_{2} & -\i\partial_{3}+\i
\end{array}\right)$. 

On first glance the Dirac operator does not seem to fit into the framework
we are discussing here, since it does not appear to be constructed
from descendants of (\ref{eq:mother-evo},\ref{eq:mother}). A closer
inspection, however, shows that the Dirac operator is actually unitarily
equivalent to, i.e. in the above sense a relative of , the extended
Maxwell operator (with a variant of the material law).

To see this connection we separate real and imaginary parts, which
yields that $W$ corresponds to 
\begin{align*}
\left(\begin{array}{cccc}
0 & -1-\partial_{3} & \partial_{2} & -\partial_{1}\\
1+\partial_{3} & 0 & \partial_{1} & \partial_{2}\\
-\partial_{2} & -\partial_{1} & 0 & -1+\partial_{3}\\
\partial_{1} & -\partial_{2} & 1-\partial_{3} & 0
\end{array}\right) & =\left(\begin{array}{cccc}
0 & -1 & 0 & 0\\
1 & 0 & 0 & 0\\
0 & 0 & 0 & -1\\
0 & 0 & 1 & 0
\end{array}\right)+\left(\begin{array}{cccc}
0 & -\partial_{3} & \partial_{2} & -\partial_{1}\\
\partial_{3} & 0 & \partial_{1} & \partial_{2}\\
-\partial_{2} & -\partial_{1} & 0 & \partial_{3}\\
\partial_{1} & -\partial_{2} & -\partial_{3} & 0
\end{array}\right).
\end{align*}

Noting that
\begin{align*}
\left(\begin{array}{cccc}
0 & 1 & 0 & 0\\
0 & 0 & -1 & 0\\
0 & 0 & 0 & -1\\
-1 & 0 & 0 & 0
\end{array}\right)\left(\begin{array}{cccc}
0 & -\partial_{3} & \partial_{2} & -\partial_{1}\\
\partial_{3} & 0 & \partial_{1} & \partial_{2}\\
-\partial_{2} & -\partial_{1} & 0 & \partial_{3}\\
\partial_{1} & -\partial_{2} & -\partial_{3} & \text{0}
\end{array}\right)\left(\begin{array}{cccc}
0 & 0 & 0 & 1\\
1 & 0 & 0 & 0\\
0 & 1 & 0 & 0\\
0 & 0 & 1 & 0
\end{array}\right) & =\left(\begin{array}{cccc}
0 & \partial_{1} & \partial_{2} & \partial_{3}\\
\partial_{1} & 0 & -\partial_{3} & \partial_{2}\\
\partial_{2} & \partial_{3} & 0 & -\partial_{1}\\
\partial_{3} & -\partial_{2} & \partial_{1} & 0
\end{array}\right)\\
 & =\left(\begin{array}{cc}
0 & \dive\\
\grad & \curl
\end{array}\right)
\end{align*}
and 
\begin{align*}
\left(\begin{array}{cccc}
0 & 1 & 0 & 0\\
0 & 0 & -1 & 0\\
0 & 0 & 0 & -1\\
-1 & 0 & 0 & 0
\end{array}\right)\left(\begin{array}{cccc}
0 & -1 & 0 & 0\\
1 & 0 & 0 & 0\\
0 & 0 & 0 & -1\\
0 & 0 & 1 & \text{0}
\end{array}\right)\left(\begin{array}{cccc}
0 & 0 & 0 & 1\\
1 & 0 & 0 & 0\\
0 & 1 & 0 & 0\\
0 & 0 & 1 & 0
\end{array}\right) & =\left(\begin{array}{cccc}
0 & 1 & 0 & 0\\
0 & 0 & -1 & 0\\
0 & 0 & 0 & -1\\
-1 & 0 & 0 & 0
\end{array}\right)\left(\begin{array}{cccc}
-1 & 0 & 0 & 0\\
0 & 0 & 0 & 1\\
0 & 0 & -1 & 0\\
0 & 1 & 0 & \text{0}
\end{array}\right)\\
 & =\left(\begin{array}{cccc}
0 & 0 & 0 & 1\\
0 & 0 & 1 & 0\\
0 & -1 & 0 & 0\\
1 & 0 & 0 & 0
\end{array}\right)
\end{align*}
we obtain the unitary equivalence
\begin{align*}
 & \left(\begin{array}{cc}
\left(\begin{array}{cccc}
0 & 0 & 0 & 1\\
1 & 0 & 0 & 0\\
0 & 1 & 0 & 0\\
0 & 0 & 1 & 0
\end{array}\right) & \left(\begin{array}{cccc}
0 & 0 & 0 & 0\\
0 & 0 & 0 & 0\\
0 & 0 & 0 & 0\\
0 & 0 & 0 & 0
\end{array}\right)\\
\left(\begin{array}{cccc}
0 & 0 & 0 & 0\\
0 & 0 & 0 & 0\\
0 & 0 & 0 & 0\\
0 & 0 & 0 & 0
\end{array}\right) & \left(\begin{array}{cccc}
0 & 1 & 0 & 0\\
0 & 0 & -1 & 0\\
0 & 0 & 0 & -1\\
-1 & 0 & 0 & 0
\end{array}\right)
\end{array}\right)\left(\begin{array}{cc}
0 & -W^{*}\\
W & 0
\end{array}\right)\left(\begin{array}{cc}
\left(\begin{array}{cccc}
0 & 1 & 0 & 0\\
0 & 0 & 1 & 0\\
0 & 0 & 0 & 1\\
1 & 0 & 0 & 0
\end{array}\right) & \left(\begin{array}{cccc}
0 & 0 & 0 & 0\\
0 & 0 & 0 & 0\\
0 & 0 & 0 & 0\\
0 & 0 & 0 & 0
\end{array}\right)\\
\left(\begin{array}{cccc}
0 & 0 & 0 & 0\\
0 & 0 & 0 & 0\\
0 & 0 & 0 & 0\\
0 & 0 & 0 & 0
\end{array}\right) & \left(\begin{array}{cccc}
0 & 0 & 0 & -1\\
1 & 0 & 0 & 0\\
0 & -1 & 0 & 0\\
0 & 0 & -1 & 0
\end{array}\right)
\end{array}\right)=\\
 & =\left(\begin{array}{cc}
\left(\begin{array}{cc}
0 & \left(\begin{array}{ccc}
0 & 0 & 0\end{array}\right)\\
\left(\begin{array}{c}
0\\
0\\
0
\end{array}\right) & \left(\begin{array}{ccc}
0 & 0 & 0\\
0 & 0 & 0\\
0 & 0 & 0
\end{array}\right)
\end{array}\right) & \left(\begin{array}{cc}
0 & \dive\\
\grad & -\curl
\end{array}\right)+\left(\begin{array}{cc}
0 & \left(\begin{array}{ccc}
0 & 0 & -1\end{array}\right)\\
\left(\begin{array}{c}
0\\
0\\
-1
\end{array}\right) & \left(\begin{array}{ccc}
0 & 1 & 0\\
-1 & 0 & 0\\
0 & 0 & 0
\end{array}\right)
\end{array}\right)\\
\left(\begin{array}{cc}
0 & \dive\\
\grad & \curl
\end{array}\right)+\left(\begin{array}{cc}
0 & \left(\begin{array}{ccc}
0 & 0 & 1\end{array}\right)\\
\left(\begin{array}{c}
0\\
0\\
1
\end{array}\right) & \left(\begin{array}{ccc}
0 & 1 & 0\\
-1 & 0 & 0\\
0 & 0 & 0
\end{array}\right)
\end{array}\right) & \left(\begin{array}{cc}
0 & \left(\begin{array}{ccc}
0 & 0 & 0\end{array}\right)\\
\left(\begin{array}{c}
0\\
0\\
0
\end{array}\right) & \left(\begin{array}{ccc}
0 & 0 & 0\\
0 & 0 & 0\\
0 & 0 & 0
\end{array}\right)
\end{array}\right)
\end{array}\right).
\end{align*}

In the free-space situation the Dirac operator $\partial_{0}+\left(\begin{array}{cc}
0 & -W^{*}\\
W & 0
\end{array}\right)$ is thus unitarily equivalent to the extended Maxwell operator 
\[
\partial_{0}+\mathcal{M}_{1}+\left(\begin{array}{cccc}
0 & 0 & 0 & \interior{\dive}\\
0 & 0 & \interior{\grad} & -\curl\\
0 & \dive & 0 & 0\\
\grad & \interior{\curl} & 0 & 0
\end{array}\right)
\]
 where $\mathcal{M}_{1}=\left(\begin{array}{cc}
\left(\begin{array}{cc}
0 & \left(\begin{array}{ccc}
0 & 0 & 0\end{array}\right)\\
\left(\begin{array}{c}
0\\
0\\
0
\end{array}\right) & \left(\begin{array}{ccc}
0 & 0 & 0\\
0 & 0 & 0\\
0 & 0 & 0
\end{array}\right)
\end{array}\right) & \left(\begin{array}{cc}
0 & \left(\begin{array}{ccc}
0 & 0 & -1\end{array}\right)\\
\left(\begin{array}{c}
0\\
0\\
-1
\end{array}\right) & \left(\begin{array}{ccc}
0 & 1 & 0\\
-1 & 0 & 0\\
0 & 0 & 0
\end{array}\right)
\end{array}\right)\\
\left(\begin{array}{cc}
0 & \left(\begin{array}{ccc}
0 & 0 & 1\end{array}\right)\\
\left(\begin{array}{c}
0\\
0\\
1
\end{array}\right) & \left(\begin{array}{ccc}
0 & 1 & 0\\
-1 & 0 & 0\\
0 & 0 & 0
\end{array}\right)
\end{array}\right) & \left(\begin{array}{cc}
0 & \left(\begin{array}{ccc}
0 & 0 & 0\end{array}\right)\\
\left(\begin{array}{c}
0\\
0\\
0
\end{array}\right) & \left(\begin{array}{ccc}
0 & 0 & 0\\
0 & 0 & 0\\
0 & 0 & 0
\end{array}\right)
\end{array}\right)
\end{array}\right)$ is skew-selfadjoint, i.e. from the electrodynamics perspective we
are in a chiral media case.

Thus we have shown that the Dirac equation also fits seamlessly into
our construction of descendants of (\ref{eq:mother-evo},\ref{eq:mother})
and their interaction. In particular, the Dirac operator is a relative%
\footnote{In the framework of quaternions a connection between a differently
extended time-harmonic Maxwell operator and the time-harmonic Dirac
operator has earlier been discovered by Kravchenko and Shapiro, \cite{0305-4470-28-17-030},
compare also \cite{Krav}.%
} of the extended Maxwell operator discussed above. 
\begin{rem}
It is a rather remarkable observation that the Dirac equation is so
closely connected to the extended Maxwell system. It appears from
this perspective that spinors are actually a redundant construction
since the alternating forms setup for the extended Maxwell system
is already quite sufficient to discuss Dirac equations. The interpretation
of this observation is not a mathematical issue but may well be a
matter for theoretical physicists to contemplate.
\end{rem}

\subsubsection{Coupled Systems}

Let us recall from \cite{PDE_DeGruyter} the systematic coupling mechanism
between various different descendants. Without coupling the systems
of interest can be combined simply by writing them together in diagonal
block operator matrix form:
\[
\partial_{0}\left(\begin{array}{c}
V_{0}\\
\vdots\\
\vdots\\
V_{n}
\end{array}\right)+A\left(\begin{array}{c}
U_{0}\\
\vdots\\
\vdots\\
U_{n}
\end{array}\right)=\left(\begin{array}{c}
f_{0}\\
\vdots\\
\vdots\\
f_{n}
\end{array}\right),
\]
where

\begin{align*}
A & =\left(\begin{array}{cccc}
A_{0} & 0 & \cdots & 0\\
0 & \ddots &  & \vdots\\
\vdots &  & \ddots & 0\\
0 & \cdots & 0 & A_{n}
\end{array}\right)
\end{align*}
inherits the skew-selfadjointness in $H=\bigoplus_{k=0,\ldots,n}H_{k}$
from its skew-selfadjoint diagonal entries $A_{k}:D\left(A_{k}\right)\subseteq H_{k}\to H_{k}$,
$k=0,\ldots,n$. The combined material laws here take the simple block
diagonal form
\[
V=\left(\begin{array}{c}
V_{0}\\
\vdots\\
\vdots\\
V_{n}
\end{array}\right)=M^{\mathrm{in}}\left(\partial_{0}^{-1}\right)U\coloneqq\left(\begin{array}{cccc}
M_{00}\left(\partial_{0}^{-1}\right) & 0 & \cdots & 0\\
0 & \ddots &  & \vdots\\
\vdots &  & \ddots & 0\\
0 & \cdots & 0 & M_{nn}\left(\partial_{0}^{-1}\right)
\end{array}\right)\left(\begin{array}{c}
U_{0}\\
\vdots\\
\vdots\\
U_{n}
\end{array}\right).
\]
Coupling between these phenomena now can be modeled by expanding the
material law to contain block off-diagonal entries
\begin{align*}
M^{\mathrm{ex}}\left(\partial_{0}^{-1}\right) & \coloneqq\left(\begin{array}{cccc}
M_{00}\left(\partial_{0}^{-1}\right) & \cdots & \cdots & M_{0n}\left(\partial_{0}^{-1}\right)\\
\vdots & \ddots &  & \vdots\\
\vdots &  & \ddots & \vdots\\
M_{n0}\left(\partial_{0}^{-1}\right) & \cdots & \cdots & M_{nn}\left(\partial_{0}^{-1}\right)
\end{array}\right)-\left(\begin{array}{cccc}
M_{00}\left(\partial_{0}^{-1}\right) & 0 & \cdots & 0\\
0 & \ddots &  & \vdots\\
\vdots &  & \ddots & 0\\
0 & \cdots & 0 & M_{nn}\left(\partial_{0}^{-1}\right)
\end{array}\right).
\end{align*}
The full material law now is of the familiar form 
\[
V=\mathcal{M}\left(\partial_{0}^{-1}\right)U
\]
 with
\begin{align*}
\mathcal{M}\left(\partial_{0}^{-1}\right) & \coloneqq M^{\mathrm{in}}\left(\partial_{0}^{-1}\right)+M^{\mathrm{ex}}\left(\partial_{0}^{-1}\right).
\end{align*}

This coupling mechanism now allows to model thermo-elasticity, thermo-piezo-electro-magnetism
and so on. A number of examples for coupled systems have been discussed
elsewhere, see \cite{Picard2005,Pi2009-1,pre05665378,pre05760017,PIC_2010:1889,McGhee2011}.
The ``philosophy'' of this coupling mechanism is that coupling occurs
only via the material law. 

In the following we shall illustrate the abstract coupling mechanism
with a particular concrete example, which will at the same time serve
to exemplify the construction of descendants of coupled systems.

Starting point of our example collection is the classical system of
thermo-elasticity, which will also allow us to re-iterate the point
made previously in connection with the Dirac operator and the extended
Maxwell operator that indeed systems with very different physical
interpretations may share the same solution theory with differences
being incorporated merely in possibly different material laws (or
their interpretation).

\paragraph{Thermo-Elasticity and Biot's Model for Porous Media}

The classical system of $\left(1+3\right)-$di\-men\-sio\-nal thermo-elasticity%
\footnote{Due to an inconvenient choice of unknowns the original classical system
of thermo-elasticity has an unbounded coupling term, see \cite{Leis:Buch:2}.
The form given here avoids this drawback. More specifically the difference
hinges on the use of stress instead of strain as unknown tensor field.%
} can be described by
\[
\left(\partial_{0}\mathcal{M}\left(\partial_{0}^{-1}\right)+A\right)\left(\begin{array}{c}
\eta\\
\zeta\\
s\\
T
\end{array}\right)=F
\]
with
\[
A\coloneqq\left(\begin{array}{cccc}
0 & -\textrm{div} & 0 & 0\\
-\interior{\textrm{grad}} & 0 & 0 & 0\\
0 & 0 & 0 & -\textrm{Div}\\
0 & 0 & -\interior{\textrm{Grad}} & 0
\end{array}\right).
\]
The classical material law is of the form
\begin{align*}
\mathcal{M}\left(\partial_{0}^{-1}\right) & =\mathcal{M}_{0}+\partial_{0}^{-1}\mathcal{M}_{1}
\end{align*}
with

\[
\mathcal{M}_{0}\coloneqq\left(\begin{array}{cccc}
\nu_{1}+\Gamma^{*}C^{-1}\Gamma & \qquad0 & \qquad0 & \qquad\Gamma^{*}C^{-1}\\
0 & \qquad0 & \qquad0 & \qquad0\\
0 & \qquad0 & \qquad\nu_{2} & \qquad0\\
C^{-1}\Gamma & \qquad0 & \qquad0 & \qquad C^{-1}
\end{array}\right),\;\mathcal{M}_{1}\coloneqq\left(\begin{array}{cccc}
0 & \qquad0 & \qquad0 & \qquad0\\
0 & \qquad\kappa^{-1} & \qquad0 & \qquad0\\
0 & \qquad0 & \qquad0 & \qquad0\\
0 & \qquad0 & \qquad0 & \qquad0
\end{array}\right)
\]

where $\nu_{1},\kappa,\nu_{2},C$ are continuous selfadjoint and strictly
positive definite operators. This system formally coincides with Biot's
porous media model, merely the meaning of the quantities involved,
i.e. the units, have changed, see e.g. \cite{pre05665378}. 

As in all these models we may allow for more complex material laws
as long as (\ref{eq:posdef-2}) is maintained: 
\begin{equation}
\left(\partial_{0}\mathcal{M}\left(\partial_{0}^{-1}\right)+\left(\begin{array}{cccc}
0 & -\textrm{div} & 0 & 0\\
-\interior{\textrm{grad}} & 0 & 0 & 0\\
0 & 0 & 0 & -\textrm{Div}\\
0 & 0 & -\interior{\textrm{Grad}} & 0
\end{array}\right)\right)\left(\begin{array}{c}
\eta\\
\zeta\\
s\\
T
\end{array}\right)=F.\label{eq:biot-thermo-elastic}
\end{equation}

\paragraph{Reissner-Mindlin Plate}

Assuming $\Omega\coloneqq\Omega_{0}\times\mathbb{T}\subseteq\mathbb{R}^{2}\times\mathbb{T}\eqqcolon M$
(instead of $M=\mathbb{R}^{3}$) we can reduce (\ref{eq:biot-thermo-elastic})
by one spatial dimension to a (1+2)-di\-men\-sio\-nal evolutionary
problem following the strategy in Section \ref{sub:Reducing-Dimensions}.
Indeed, the resulting evolutionary equation looks the same, but now
it has to be interpreted in $L_{0}^{2}\left(\Omega_{0}\right)\oplus L_{1}^{2}\left(\Omega_{0}\right)\oplus L_{1}^{2}\left(\Omega_{0}\right)\oplus\mathrm{sym}\left[L_{2}^{2}\left(\Omega_{0}\right)\right]$
with $\Omega_{0}\subseteq\mathbb{R}^{2}.$ With
\[
F\eqqcolon\left(\begin{array}{c}
f\\
0\\
g\\
0
\end{array}\right)
\]
and

\begin{align*}
\mathcal{M}\left(\partial_{0}^{-1}\right) & =\mathcal{M}_{0}+\partial_{0}^{-1}\mathcal{M}_{1}
\end{align*}
with

\[
\mathcal{M}_{0}\coloneqq\left(\begin{array}{cccc}
\nu_{1} & \qquad0 & \qquad0 & \qquad0\\
0 & \qquad\kappa & \qquad0 & \qquad0\\
0 & \qquad0 & \qquad\nu_{2} & \qquad0\\
0 & \qquad0 & \qquad0 & \qquad C^{-1}
\end{array}\right),\;\mathcal{M}_{1}\coloneqq\left(\begin{array}{cccc}
d & \qquad0 & \qquad0 & \qquad0\\
0 & \qquad0 & \qquad-1 & \qquad0\\
0 & \qquad1 & \qquad0 & \qquad0\\
0 & \qquad0 & \qquad0 & \qquad0
\end{array}\right)
\]

where $\nu_{1},\kappa,\nu_{2},C$ continuous selfadjoint and strictly
positive definite (with physically different meaning, i.e. different
units!) we obtain the Reissner-Mindlin plate model. Coupling occurs
here via $\mathcal{M}_{1}.$

Note that by reducing this to a second order system (by substituting
the equations from rows 2 and 4 into the remaining two equations)
we obtain the perhaps more familiar form of the Reissner-Mindlin model
(with homogeneous Dirichlet boundary condition)
\begin{equation}
\begin{array}{rl}
\nu_{1}\partial_{0}^{2}\tilde{\eta}-\dive\kappa^{-1}\left(\interior{\textrm{grad}}\tilde{\eta}+\tilde{s}\right)+d\partial_{0}\tilde{\eta} & =f,\\
\nu_{2}\partial_{0}^{2}\tilde{s}-\Div C\interior{\Grad}\tilde{s}+\kappa^{-1}\left(\interior{\textrm{grad}}\tilde{\eta}+\tilde{s}\right) & =g,
\end{array}\label{eq:RM-plate}
\end{equation}
where $\tilde{\eta}\coloneqq\partial_{0}^{-1}\eta$, $\tilde{s}\coloneqq\partial_{0}^{-1}s.$

For the damping coefficient $d=0$ we have that the system is conservative,
since 
\[
\sqrt{\mathcal{M}_{0}^{-1}}\left(\left(\begin{array}{cccc}
0 & 0 & 0 & 0\\
0 & 0 & -1 & 0\\
0 & 1 & 0 & 0\\
0 & 0 & 0 & 0
\end{array}\right)+\left(\begin{array}{cccc}
0 & -\textrm{div} & 0 & 0\\
-\interior{\textrm{grad}} & 0 & 0 & 0\\
0 & 0 & 0 & -\textrm{Div}\\
0 & 0 & -\interior{\textrm{Grad}} & 0
\end{array}\right)\right)\sqrt{\mathcal{M}_{0}^{-1}}
\]
is skew-selfadjoint and thus generates a unitary group leading to
norm conservation for pure initial value problems.  
\begin{rem}
~ \end{rem}
\begin{enumerate}
\item (A note on the Kirchhoff-Love plate)

Letting in
\[
\partial_{0}\left(\begin{array}{cccc}
\nu_{1} & \qquad0 & \qquad0 & \qquad0\\
0 & \qquad\kappa & \qquad0 & \qquad0\\
0 & \qquad0 & \qquad\nu_{2} & \qquad0\\
0 & \qquad0 & \qquad0 & \qquad C^{-1}
\end{array}\right)+\left(\begin{array}{cccc}
d & \qquad0 & \qquad0 & \qquad0\\
0 & \qquad0 & \qquad-1 & \qquad0\\
0 & \qquad1 & \qquad0 & \qquad0\\
0 & \qquad0 & \qquad0 & \qquad0
\end{array}\right)+A
\]
$\kappa=0$ and $\nu_{2}=0$ (in consequence destroying well-posedness
for associated initial boundary value problems) yields
\[
\partial_{0}\left(\begin{array}{cccc}
\nu_{1} & \qquad0 & \qquad0 & \qquad0\\
0 & \qquad0 & \qquad0 & \qquad0\\
0 & \qquad0 & \qquad0 & \qquad0\\
0 & \qquad0 & \qquad0 & \qquad C^{-1}
\end{array}\right)+\left(\begin{array}{cccc}
d & \qquad0 & \qquad0 & \qquad0\\
0 & \qquad0 & \qquad-1 & \qquad0\\
0 & \qquad1 & \qquad0 & \qquad0\\
0 & \qquad0 & \qquad0 & \qquad0
\end{array}\right)+A
\]
Eliminating the second and third unknowns and equations yields 
\begin{align}
\left(\partial_{0}\left(\begin{array}{cc}
\nu_{1} & 0\\
0 & C^{-1}
\end{array}\right)+\left(\begin{array}{cc}
d & 0\\
0 & 0
\end{array}\right)+\left(\begin{array}{cc}
0 & -\dive\Div\\
\Grad\grad & 0
\end{array}\right)\right)\left(\begin{array}{c}
\eta\\
T
\end{array}\right) & =\left(\begin{array}{c}
f\\
0
\end{array}\right).\label{eq:KL}
\end{align}
This is the Kirchhoff-Love plate model, which by a suitable choice
of boundary condition is again accessible to the abstract solution
theory of evolutionary equations, see e.g. \cite{PDE_DeGruyter}.

\item Following the ``logic'' of the transition from the Reissner-Mindlin
plate to the Kirchoff-Love plate we could also formally obtain the
real Schr{\"o}dinger operator $\partial_{0}+\left(\begin{array}{cc}
0 & -\Delta_{D}\\
\Delta_{D} & 0
\end{array}\right),$ see Section \ref{sub:Acoustic-Equation--Heat}, from the first order
system 
\[
\partial_{0}\left(\begin{array}{cccc}
1 & 0 & 0 & 0\\
0 & \epsilon & 0 & 0\\
0 & 0 & \epsilon & 0\\
0 & 0 & 0 & 1
\end{array}\right)+\left(\begin{array}{cccc}
0 & 0 & 0 & 0\\
0 & 0 & -1 & 0\\
0 & 1 & 0 & 0\\
0 & 0 & 0 & 0
\end{array}\right)+\left(\begin{array}{cccc}
0 & -\sqrt{-\Delta_{D}} & 0 & 0\\
\sqrt{-\Delta_{D}} & 0 & 0 & 0\\
0 & 0 & 0 & \sqrt{-\Delta_{D}}\\
0 & 0 & -\sqrt{-\Delta_{D}} & 0
\end{array}\right)
\]
by similarly letting $\epsilon=0$ and eliminating the second and
third components and equations. 
\end{enumerate}

\paragraph{The Timoshenko Beam }

Assuming $\Omega_{0}\coloneqq\Omega_{1}\times\mathbb{T}\subseteq\mathbb{R}\times\mathbb{T}\eqqcolon M$
(instead of $\Omega_{0}\subseteq\mathbb{R}^{2}$) for the Reissner-Mindlin
plate, following the arguments in Section \ref{sub:Reducing-Dimensions},
we can reduce this model further to the $\left(1+1\right)$-di\-men\-sional
case, which leads to the Timoshenko beam model. In Cartesian coordinates
this is now 
\[
\left(\partial_{0}\mathcal{M}\left(\partial_{0}^{-1}\right)+\left(\begin{array}{cccc}
0 & -\partial_{1} & 0 & 0\\
-\interior{\partial}_{1} & 0 & 0 & 0\\
0 & 0 & 0 & -\partial_{1}\\
0 & 0 & -\interior{\partial}_{1} & 0
\end{array}\right)\right)\left(\begin{array}{c}
\eta\\
\zeta\\
s\\
T
\end{array}\right)=\left(\begin{array}{c}
f\\
0\\
g\\
0
\end{array}\right),
\]
where the material law has the same shape as before
\begin{align*}
\mathcal{M}\left(\partial_{0}^{-1}\right) & =\mathcal{M}_{0}+\partial_{0}^{-1}\mathcal{M}_{1}
\end{align*}
with

\[
\mathcal{M}_{0}\coloneqq\left(\begin{array}{cccc}
\nu_{1} & \qquad0 & \qquad0 & \qquad0\\
0 & \qquad\kappa & \qquad0 & \qquad0\\
0 & \qquad0 & \qquad\nu_{2} & \qquad0\\
0 & \qquad0 & \qquad0 & \qquad C^{-1}
\end{array}\right),\;\mathcal{M}_{1}\coloneqq\left(\begin{array}{cccc}
d & \qquad0 & \qquad0 & \qquad0\\
0 & \qquad0 & \qquad-1 & \qquad0\\
0 & \qquad1 & \qquad0 & \qquad0\\
0 & \qquad0 & \qquad0 & \qquad0
\end{array}\right)
\]

where $\nu_{1},\kappa,\nu_{2},C$ continuous selfadjoint and strictly
positive definite. Of course physical meaning (the units) have changed
once again. Here also the second order system may be more familiar
\begin{equation}
\begin{array}{rl}
\nu_{1}\partial_{0}^{2}\tilde{\eta}-\partial_{1}\kappa^{-1}\left(\interior{\partial}_{1}\tilde{\eta}+\tilde{s}\right)+d\partial_{0}\tilde{\eta} & =f,\\
\nu_{2}\partial_{0}^{2}\tilde{s}-\partial_{1}C\interior{\partial}_{1}\tilde{s}+\kappa^{-1}\left(\interior{\partial}_{1}\tilde{\eta}+\tilde{s}\right) & =g,
\end{array}\label{eq:Timoshenko}
\end{equation}
where $\tilde{\eta}\coloneqq\partial_{0}^{-1}\eta$, $\tilde{s}\coloneqq\partial_{0}^{-1}s,$
compare (\ref{eq:RM-plate}).
\begin{rem}
(Euler-Bernoulli Beam) Repeating the questionable ``construction''
of the Kirchhoff-Love plate model for the Timoshenko beam, leads to
the Euler-Bernoulli beam model. 
\end{rem}
With this we conclude our tour through various examples underscoring
the deep connectedness of seemingly very different mathematical models.
We have seen, how various particular dynamic linear model equations
can be extracted from the mother operator (\ref{eq:mother-evo},\ref{eq:mother})
assuming different material laws. The solution theory itself rests
simply on strict positive definiteness. One may well wonder if the
simplicity and transparency of these structural observations could
not give rise to a ``grand unified'' numerical scheme. %

\end{document}